%
%
%
%
\documentclass{amsart}

\usepackage{graphicx}
\usepackage[all]{xypic}

\newtheorem{theorem}{Theorem}[section]
\newtheorem{lemma}[theorem]{Lemma}
\newtheorem{proposition}[theorem]{Proposition}
\newtheorem{corollary}[theorem]{Corollary}

\theoremstyle{definition}
\newtheorem{definition}[theorem]{Definition}
\newtheorem{example}[theorem]{Example}
\newtheorem{exo}[theorem]{Exercise}

\theoremstyle{remark}
\newtheorem{remark}[theorem]{Remark}

\numberwithin{equation}{section}



\newcommand{\Spec}{\textup{Spec}}
\newcommand{\Aut}{\textup{Aut}}
\newcommand{\Hom}{\textup{Hom}}
\newcommand{\Gal}{\textup{Gal}}
\newcommand{\id}{\textup{id}}
\newcommand{\Ker}{\textup{Ker}}
\newcommand{\A}{\mathbf{A}}
\newcommand{\Z}{\mathbf{Z}}
\newcommand{\N}{\mathbf{N}}

\newcommand{\F}{\mathbf{F}}
\newcommand{\YExt}{\mathbf{YExt}}
\newcommand{\Yext}{\textup{YExt}}

\newcommand{\Res}{\mathrm{Res}}

\newcommand{\DIAG}{{\mathbf{Diag}}}
\newcommand{\Diag}{\textup{Diag}}

\begin{document}

\title{Splitting families in Galois cohomology}

\author{Cyril Demarche}
\address{Sorbonne Universit\'e, Universit\'e Paris Diderot, CNRS, Institut de Math\'ematiques de Jussieu-Paris Rive Gauche, IMJ-PRG, F-75005, Paris, France}
\curraddr{D\'epartement de math\'ematiques et applications, \'Ecole normale sup\'erieure,
CNRS, PSL Research University, 45 rue d'Ulm, 75005 Paris, France}
\email{cyril.demarche@imj-prg.fr}
\thanks{The first author acknowledges the support of the Agence Nationale de la Recherche under reference ANR-15-CE40-0002}

\author{Mathieu Florence}
\address{Sorbonne Universit\'e, Universit\'e Paris Diderot, CNRS, Institut de Math\'ematiques de Jussieu-Paris Rive Gauche, IMJ-PRG, F-75005, Paris, France}
\email{mathieu.florence@imj-prg.fr}
\thanks{The second author acknowledges the support of the Agence Nationale de la Recherche under reference  GeoLie  (ANR-15-CE40-0012).}

\subjclass[2010]{Primary: 12G05, 14L30. Secondary: 14F20, 18E30}

\date{\today}


\keywords{}

\begin{abstract}
Let $k$ be a field, with absolute Galois group $\Gamma$. Let $A/k$ be a finite \'etale group scheme of multiplicative type, i.e. a finite discrete $\Gamma$-module. Let $n \geq 2$ be an integer, and let $x \in H^n(k,A)$ be a cohomology class. We show that there exists a countable set $I$, and a familiy  $(X_i)_{i \in I}$ of (smooth, geometrically integral) $k$-varieties, such that the following holds: for any field extension $l/k$, the restriction of  $x$ vanishes in $H^n(l,A)$  if and only if (at least) one of the $X_i$'s has an $l$-point. In addition, we show that the $X_i$'s can be made into an ind-variety. In the case $n=2$, we note that one variety is enough.
\end{abstract}

\maketitle

\section*{Introduction}
Let $k$ be  a field, and let $p$ be a prime number, which is invertible in $k$.
The notion of a norm variety was introduced in the study of the Bloch-Kato conjecture.  It is a key tool in the proof provided by Rost, Suslin and Voevodsky. The norm variety $X(s)$ of a pure symbol $$s=(x_1) \cup (x_2) \cup \ldots \cup (x_n) \in H^n(k, \mu_p^{\otimes n}),$$ where the $x_i$'s are  elements of $k^\times$, was constructed by Rost (cf. \cite{Rost} or \cite{SJ}). The terminology 'norm variety' reflects that it is defined through an inductive process involving the norm of finite field extensions of degree $p$. It has the remarkable property that, if $l/k$ is a field extension, then the restriction of $s$ vanishes in $H^n(l, \mu_p^{\otimes n})$ if and only if the $l$-variety $X(s)_l$ has a $0$-cycle of degree prime-to-$p$. It enjoys nice geometric features, which we will  not mention here. For $n \geq 3$, norm varieties are, to the knowledge of the authors of this paper, known to exist for pure symbols only.\\
In this paper, we shall be interested in the following closely related problem.  Let $A/k$ be a finite \'etale group scheme of multiplicative type, that is to say, a finite discrete $\Gamma$-module. Consider a class $x \in H^n(k,A)$. Does there exists a \textit{countable} family of smooth $k$-varieties $(X_i)_ {i \in I}$, such that, for every field extension $l/k$, the presence of a $l$-point in (at least) one of the $X_i$'s is equivalent to the vanishing of $x$ in $H^n(l, A)$ ?  If such a family exist,  can it always be endowed with the structure of an ind-variety?

We provide  answers to those questions. The main results of the paper are the following:
\begin{theorem}[Corollary \ref{cor cohom deg 2} and Corollary \ref{cor cohom general}]
Let $A/k$ be a finite \'etale group scheme of multiplicative type and let $\alpha \in H^n(k,A)$, where $n \geq 2$ is an integer. 
\begin{itemize}
    \item There exists a countable family $(X_i)_{i \in I}$ of smooth geometrically integral $k$-varieties, such that for any field extension $l/k$ with $l$ infinite, $\alpha$ vanishes in $H^n(l,A)$ if and only if $X_i(l) \neq \emptyset$ for some $i$. In addition, there is such a family $(X_i)$ which is an ind-variety.
    \item If $n=2$,  the family $(X_i)$ can be replaced by a single smooth geometrically integral $k$-variety.
\end{itemize}
\end{theorem}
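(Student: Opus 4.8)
The plan is to prove the two bullets by rather different means: the case $n=2$ directly, producing a single variety, and the general case by a dévissage on $n$ whose intrinsic lack of exactness forces the passage to a countable family.

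\medskip

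\noindent\emph{The case $n=2$.} Here I would realize $\alpha\in H^2(k,A)$ as the boundary of a torsor under a connected \emph{special} group. Since $A$ is finite of multiplicative type, its character module is a finite $\Gamma$-module; dualizing a surjection onto it from a finite sum of modules $\mathrm{Ind}_{\Gamma_j}^{\Gamma}(\Z/m_j)$ gives a central embedding $A\hookrightarrow P:=\prod_j\Res_{k_j/k}\mu_{m_j}$. Let $(\alpha_j)$ be the image of $\alpha$ in $H^2(k,P)=\prod_j\mathrm{Br}(k_j)[m_j]$ and let $d_j$ be the index of $\alpha_j$. Composing the scalar embeddings $\mu_{m_j}\hookrightarrow\mu_{d_j'}\hookrightarrow SL_{d_j'}$, with $d_j'$ a common multiple of $m_j$ and $d_j$, yields a central embedding $A\hookrightarrow G:=\prod_j\Res_{k_j/k}SL_{d_j'}$ into a connected, $k$-rational, special group, so $H^1(l,G)=1$ for every extension $l/k$. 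Because a class of index $d_j$ is represented by an algebra of degree $d_j'$ once $d_j\mid d_j'$, each $\alpha_j$ dies in (nonabelian) $H^2(k_j,SL_{d_j'})$; hence $\alpha$ dies in $H^2(k,G)$, and by exactness of the central extension sequence $\alpha$ lies in the image of $\delta\colon H^1(k,G/A)\to H^2(k,A)$. Choosing $\eta_0$ with $\delta(\eta_0)=\alpha$ and letting $X$ be the associated $(G/A)$-torsor, functoriality gives $\alpha_l=\delta(\eta_0|_l)$; since $\ker\delta_l$ is the image of $H^1(l,G)=1$, one gets $\alpha_l=0\iff \eta_0|_l$ is neutral $\iff X(l)\neq\emptyset$. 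As a torsor under the connected smooth group $G/A$, the variety $X$ is smooth and geometrically integral, and no hypothesis on $l$ is needed, matching the statement.

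\medskip

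\noindent\emph{General $n$.} For $n\ge 3$ the non-abelian $H^1$ is unavailable, so I would organize the proof around Yoneda $n$-extensions: view $\alpha\in H^n(k,A)=\mathrm{Ext}^n_\Gamma(\Z,A)$ as an $n$-fold extension and splice it into short exact sequences $0\to Z_{i+1}\to E_i\to Z_i\to 0$ of discrete $\Gamma$-modules, where the $E_i$ are chosen induced (built from $\Res_{k_j/k}\mu_{m_j}$) and $Z_n=A$. Vanishing of $\alpha$ over $l$ is then equivalent to the existence of a compatible system of splittings of these sequences over $l$, a contracting homotopy of the extension. Each individual splitting is the triviality of an $\mathrm{Ext}^1$-class, i.e. a torsor under an algebraic group built by Weil restriction from the $E_i$, and hence, exactly as in the previous paragraph, is governed by the rational points of a single smooth geometrically integral $k$-variety.

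\medskip

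\noindent\emph{The family and the ind-variety.} The splittings at each stage are not unique; they form a torsor under $\Hom$-groups coming from the previous stage, so the compatible systems are parametrized by a countable tree of choices. Enumerating these choices by a countable set $I$ yields varieties $(X_i)$ whose rational points detect the existence of a compatible system, so that $\alpha_l=0\iff X_i(l)\neq\emptyset$ for some $i$. To produce an ind-variety I would realize the successive choices as an increasing union, choosing closed immersions $X_1\hookrightarrow X_2\hookrightarrow\cdots$ (via products and Weil restrictions of the building blocks) so that ``$\exists\,i:\ X_i(l)\neq\emptyset$'' becomes ``$\varinjlim X_i$ has an $l$-point''. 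The infiniteness of $l$ is used precisely when passing from the existence of a cohomological splitting to an actual rational point on a geometrically integral variety.

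\medskip

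\noindent\emph{Main obstacle.} The principal difficulty is proving that the stepwise, choice-dependent splitting conditions assemble into an \emph{equivalence} with the vanishing of $\alpha_l$: one must show that a compatible system of splittings exists over $l$ if and only if the Yoneda class $\alpha_l$ is trivial, control the coset indeterminacy at each stage so that it is encoded by $k$-varieties independent of $l$, and verify that the countable disjunction detects vanishing exactly, missing no class and producing no spurious one. Establishing this, and checking that the assembled $\varinjlim X_i$ is a genuine ind-variety with smooth geometrically integral terms, is where the bulk of the work lies; the triviality of $H^2(\cdot,SL_{d'})$ used above is what keeps the base case a single variety.
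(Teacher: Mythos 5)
Your treatment of $n=2$ is a viable alternative to the paper's: embedding $A$ into a product of Weil restrictions of groups of roots of unity and then into the special group $G=\prod_j\Res_{k_j/k}SL_{d_j'}$, and taking for $X$ a $(G/A)$-torsor lifting $\alpha$, is essentially Krashen's construction (which the paper notes its Corollary \ref{cor cohom deg 2} recovers), whereas the paper instead attaches to a representing $2$-extension $\mathcal{E}$ the finite constant group $\Aut(\mathcal{E})$ and builds a versal lifting variety for $\Aut(\mathcal{D})\to\Aut(\mathcal{E})$ via Hilbert 90. One step of yours needs more care: knowing that each $\alpha_j$ is represented by an algebra of degree $d_j'$ shows that the image of $\alpha$ in $H^2(k,Z(G))$ lifts to $H^1(k,G/Z(G))$, but concluding that $\alpha$ itself lifts to $H^1(k,G/A)$ requires a further chase comparing the central extensions $1\to A\to G\to G/A\to 1$ and $1\to Z(G)\to G\to G/Z(G)\to 1$: one lifts along $H^1(k,G/A)\to H^1(k,G/Z(G))$ (the obstruction in $H^2(k,Z(G)/A)$ vanishes because $A\to Z(G)\to Z(G)/A$ is zero) and then corrects the remaining discrepancy, which lies in the image of $H^1(k,Z(G)/A)\to H^2(k,A)$, by twisting by the central subgroup $Z(G)/A\subset G/A$. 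This is fixable, and your construction even removes the hypothesis that $l$ be infinite.

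The case $n\geq 3$, however, contains a genuine gap. You reduce the vanishing of $\alpha_l$ to ``the existence of a compatible system of splittings of these sequences over $l$, a contracting homotopy of the extension.'' That equivalence is false: a contracting homotopy of $0\to B\to E_1\to\dots\to E_n\to A\to 0$ exists if and only if the complex is split exact, i.e.\ if and only if every spliced short exact sequence $0\to Z_{i+1}\to E_i\to Z_i\to 0$ splits, which is strictly stronger than the vanishing of the Yoneda composite (a cup product $\beta_1\cup\beta_2$ can vanish with neither factor vanishing). The correct criterion is Oort's, the paper's Lemma \ref{lem ext triv}: $e=0$ iff $\mathcal{E}$ admits a morphism, restricting to the identity on $B$ and $E_n$, to some $(n-1)$-extension of $E_n$ by $B$. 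Relatedly, your claim that ``each individual splitting is the triviality of an $\mathrm{Ext}^1$-class, i.e.\ a torsor'' does not survive an honest d\'evissage: the intermediate obstructions one actually meets live in $H^i(l,-)$ for $2\le i\le n-1$, not in $H^1$, so they are not the rational points of a torsor under an algebraic group. The paper's key move, absent from your outline, is to convert the entire degree-$n$ condition into a single degree-$1$ lifting problem: an $n$-extension in $\mathcal{M}_{k,d}$ is an $n$-extension $\mathcal{E}'$ of plain $\F$-modules together with a homomorphism $\Gamma\to\Aut(\mathcal{E}')$, hence a torsor under a finite constant group, and $e_l=0$ iff that torsor lifts to $\Aut(\mathcal{D}')$ for some diagram $\mathcal{D}'$; the countable family is then indexed not by a tree of splitting choices but by the rank $m$ of a cofinal chain of free target extensions $\mathcal{E}_{n-1}(a,b,m)$ (Lemmas \ref{lem free diag}, \ref{lem ext can} and \ref{lem equiv diag}), which is precisely what makes the $X_m$ into an ind-variety.
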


Note that our main "non-formal" tool, as often (always?) in this context, is Hilbert's Theorem $90$.

\section{Notation and definitions.}
In this paper, $k$ is a field, with a given separable closure $k^s$. We denote by $\Gamma := \Gal(k^s / k)$ the absolute Galois group. The  letters  $d$ and $n$ denote two positive integers. We assume $d$ to be invertible in $k$.

We denote by $\mathcal{M}_d$ the Abelian category of finite $\Z /d \Z$-modules, and by $\mathcal{M}_{\Gamma,d}$  that of finite and discrete $\Gamma$-modules of $d$-torsion. The latter is equivalent to the category of finite $k$-group schemes of multiplicative type, killed by $d$. We denote this category by $\mathcal{M}_{k,d}$. When no confusion can arise, we will identify these categories without further notice. We have an obvious forgetful functor $\mathcal{M}_{\Gamma,d} \to \mathcal{M}_d$. 

\subsection{Groups and cohomology.}

Let $G$ be a linear algebraic $k$-group; that is, an affine $k$-group scheme of finite type. We denote by $H^1(k,G)$ the set of isomorphism classes of $G$-torsors, for the fppf topology. It coincides with the usual Galois cohomology set if $G$ is smooth. Let $\varphi: H \to G$ be a morphism of linear algebraic $k$-groups. It induces, for every field extension $l/k$, a natural map $ H^1(l, H) \to H^1(l, G),$ which we denote by $\varphi_{l,*}$.

\subsection{Yoneda Extensions.}

Let $\mathcal{A}$ be an Abelian category.
For all $n \geq 0$, $A, B \in \mathcal{A}$,  we denote by $\YExt^n_{\mathcal{A}}(A,B)$ (or $\YExt^n(A,B)$) the (additive) category of Yoneda $n$-extensions of $B$ by $A$, and by $\Yext^n_{\mathcal{A}}(A,B)$ (or $\Yext^n(A,B)$) the Abelian group of Yoneda equivalence classes in $\YExt^n(A,B)$ .

\begin{remark}
The groups $\Yext^n_{\mathcal{A}}(A,B)$ can also be defined as $\Hom_{\mathcal{D}(\mathcal{A})}(A, B[n])$, where $\mathcal{D}(\mathcal{A})$ denotes the derived category of $\mathcal{A}$.
\end{remark}

Given $A, B \in \mathcal{M}_d$, we put $\YExt^n_{d}(A,B) := \YExt^n_{\mathcal{M}_d}(A,B)$. Given $A, B \in \mathcal{M}_{k,d}$, we  put  $\YExt^n_{k,d}(A,B) := \YExt^n_{\mathcal{M}_{k,d}}(A,B)$.

\begin{remark}\label{compcoh}
Let $A$ be a finite discrete $\Gamma$-module.\\
Let $d$ be the exponent of $A$. Then there is a canonical isomorphism 
$$\Yext^n_{k,d}(\Z / d \Z, A) \xrightarrow{\sim} H^n(\Gamma, A) \, $$
 where $H^n(\Gamma,A)$ denotes the usual $n$-th cohomology group.
\end{remark}

\begin{remark}

Let $l/k$ be any field extension.  For $A, B \in \mathcal{M}_{k,d}$, we have a restriction map $$ \mathrm{Res_{l/k}}: \YExt^n_{k,d}(A,B) \longrightarrow \YExt^n_{l,d}(A,B) .$$ 
\end{remark}

\subsection{Lifting triangles.}

Let $\varphi : H \to G$ be a morphism of linear $k$-algebraic groups.
 A lifting triangle (relative to $\varphi$) is a commutative triangle 

\[
\xymatrix{T:
Q \ar[r]^f \ar[dr]_{H_X} & P \ar[d]^{G_X} \\
& X \, ,
}
\]

where $X$ is a $k$-scheme, $Q \longrightarrow X$ (resp. $P \longrightarrow X$) is an $H_X$-torsor (resp. a $G_X$-torsor), and where $f$ is an $H$-equivariant morphism (formula on the functors of points: $ f(h.x)=\varphi(h).f(x)$).\\ Note that such a diagram  is equivalent to the data of an isomorphism between  the $G_X$-torsors $P$ and $\varphi_*(Q)$.\\
The $k$-scheme $X$ is called the \textit{base} of the lifting triangle $T$.\\ We have an obvious notion of isomorphism of lifting triangles. \\Moreover, if $\eta: Y \longrightarrow X$ is a morphism of $k$-schemes, we can form the pullback $\eta^*(T)$; it is a lifting triangle, over the base $Y$.

\subsection{Lifting varieties.}

Let $\varphi : H \to G$ be a morphism of linear $k$-algebraic groups.
Let $P \to \Spec(k)$ be a torsor under the group $G$.\\

A geometrically integral $k$-variety $X$ will be called a \textit{lifting variety} (for the pair $(\varphi, P)$) if it fits into a lifting triangle $\mathcal T$:
\[
\xymatrix{
\mathcal{Q} \ar[r]^F \ar[dr]_{H_X} & P \times_k X \ar[d]^{G_X} \\
& X \, ,
}
\]

such that the following holds:\\

For every field extension $l/k$, with $l$ infinite, and for every lifting triangle $t$:  \[
\xymatrix{
Q \ar[r]^f \ar[dr]_{H_l} & P \times_k l \ar[d]^{G_l} \\
& \Spec(l) \, ,
}
\] the set of $l$-rational points $x : \Spec(l) \longrightarrow X$ such that the pullback $\mathcal T_x:=x^*(\mathcal T)$ is isomorphic to $t$ (as a lifting triangle over $\Spec(l)$) is Zariski-dense in $X$, hence non-empty.\\

In particular, the variety $X$ has an $l$-point if and only if the class of the $G$-torsor $P$ in $H^1(l, G)$ is in the image of the map $\varphi_{l,*} : H^1(l,H) \to H^1(l,G)$.

\subsection{Splitting families.}

Let  $A, B$ be objects of $\mathcal{M}_{k,d}$.  Pick a class $x \in \YExt^n_{k,d}(A,B)$.\\

A countable set $(X_i)_ {i \in I}$ of (smooth, geometrically integral) $k$-varieties will be called a \textit{splitting family} for $x$ if the following holds:\\

For every field extension $l/k$, with $l$ infinite, $\Res_{l/k}(x) $ vanishes in $\YExt^n_{l,d}(A,B)$ if and only if (at least) one of the $l$-varieties $X_i$ possesses a $l$-point. \\

Whenever a splitting family exists, it is natural to ask whether it can be made into an ind-variety. By this, we mean here that $I=\mathbb N$ and that, for each $i \geq 0,$ we are given a closed embedding of $k$-varieties $X_i \longrightarrow X_{i+1}$.


\section{Existence of lifting varieties.}

This section contains the non-formal ingredient of this paper, which may have an interest on its own.\\

Let $\varphi : H \to G$ be a morphism of linear $k$-algebraic groups; that is, of affine $k$-group schemes of finite type.\\
Let $P \to \Spec(k)$ be a torsor under the group $G$.\\

The aim of this section is to construct a lifting variety for $(\varphi, P)$. Equivalently, we will build a "nice" $k$-variety $X$ that is a versal object for $H$-torsors that lift the $G$-torsor $P$, in the sense explained in the previous paragraph. 



In particular, recall that $X(l) \neq \emptyset$ if and only if $[P_l]$ lifts to $H^1(l,H)$, for every field extension $l/k$, with $l$ infinite.

To construct such an $X$, we mimick the usual construction of versal torsors (see for instance \cite{Ser}, section I.5). We just have to push it slightly further.\\

There exists a finite dimensional $k$-vector space $V$  endowed with a generically free linear action of $H$. There exists a dense open subset $V_0 \subset \A(V)$,  stable under the action of $H$, and such that  the geometric quotient $$ V_0 \longrightarrow V_0/H$$ exists, and is an $H$-torsor, which we denote by $\mathcal Q$. \\

Form the quotient $$X_{\varphi,P}:=(P \times_k V_0)/H, $$ where $H$ acts on $P$ via $\varphi$, and on $V_0$ in the natural way. Projecting onto $V_0$ induces a  morphism $$\pi: X_{\varphi,P}  \longrightarrow V_0/H, $$ which can also be described as the twist of $P$ by the $H$-torsor $\mathcal Q$, over the base $V_0/H$.\\


Note that $X_{\varphi, P}$ depends on the choice of $V$ (up to stable birational equivalence).\\

If we denote by $\mathcal{Q}'$ the pullback via $\pi$  of the $H$-torsor $\mathcal Q$, there is a natural lifting triangle $\mathcal{T}_{\varphi, P}$:
\[
\xymatrix{
\mathcal{Q}' \ar[rd]_{H} \ar[r] &  X_{\varphi, P} \times_k P \ar[d]^{G} \\
& X_{\varphi, P} 
}
\]
Its existence is explained by the following key fact. If $Y := V_0/H$, then for any $Y$-scheme $S$, a point  $$s \in X_{\varphi, P}(S)=\Hom_{Y-\mathrm{sch}}(S,X_{\varphi, P})$$ is exactly the same as an $H$-equivariant morphism between $\mathcal Q \times_Y S$ and $X_{\varphi, P} \times_k S$, over the base $S$ (see for instance \cite{Gir}, th\'eor\`eme III.1.6.(ii)), i.e. it is the same as a lifting triangle relative to $\varphi$ over the base $S$, i.e. an isomorphism of $G$-torsors between $\varphi_* \mathcal{Q} \times_Y S$ and $P \times_k S$. We shall refer to this property as the universal property of $X_{\varphi, P}$. 

\begin{proposition} \label{lem versal lift}
The $k$-variety $X_{\varphi, P}$ is a lifting variety for the pair $(\varphi, P)$.


In particular, $X_{\varphi, P}(l) \neq \emptyset$ if and only if $[P_l]$ lifts to $H^1(l,H)$.
\end{proposition}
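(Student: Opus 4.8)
The universal property of $X_{\varphi,P}$ recalled above already supplies the dictionary we need: for an infinite extension $l/k$ it identifies, naturally, an $l$-point $x$ of $X_{\varphi,P}$ lying over $y=\pi(x)$ with an isomorphism of $G$-torsors $\varphi_*(y^*\mathcal Q)\xrightarrow{\sim}P_l$, and under this identification $\mathcal T_x=x^*(\mathcal T_{\varphi,P})$ is exactly the lifting triangle thus described. So the whole statement reduces to the density clause in the definition of a lifting variety; the concluding ``in particular'' (that $X_{\varphi,P}(l)\neq\emptyset$ iff $[P_l]\in\im\varphi_{l,*}$) then follows at once, a lifting triangle over $l$ existing precisely when $[P_l]$ lifts to $H^1(l,H)$. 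Thus I fix a lifting triangle $t$ over $l$, let $Q_0$ be its underlying $H$-torsor (so $\varphi_*Q_0\cong P_l$), and aim to show that $\{x\in X_{\varphi,P}(l):\mathcal T_x\cong t\}$ is Zariski-dense.

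The first step is to determine when two lifting triangles over $l$ are isomorphic. An isomorphism of lifting triangles may act on both torsors, in particular by any automorphism of the $G$-torsor $P_l$; hence any isomorphism $\psi\colon Q\xrightarrow{\sim}Q'$ of the underlying $H$-torsors extends to an isomorphism of triangles, completed by the automorphism $\theta'\circ\varphi_*(\psi)\circ\theta^{-1}$ of $P_l$. Consequently the isomorphism class of a lifting triangle over $l$ depends only on that of its $H$-torsor. Together with the universal property and the equality $x^*\mathcal Q'=\pi(x)^*\mathcal Q$ this gives
\[
\{x\in X_{\varphi,P}(l):\mathcal T_x\cong t\}=\{x\in X_{\varphi,P}(l):x^*\mathcal Q'\cong Q_0\}.
\]

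The core of the argument is then a twisting computation. Writing $W:=P\times_k V_0$, with its diagonal $H$-action, I form the twist $Z:=Q_0\times^H W$ over $l$. The morphism $W\to W/H=(X_{\varphi,P})_l$ makes $Z\to(X_{\varphi,P})_l$ into an $H$-torsor, in particular a surjection, and the $l$-points of $Z$ over a point $x$ are precisely the isomorphisms $x^*\mathcal Q'\xrightarrow{\sim}Q_0$; so the image of $Z(l)$ in $X_{\varphi,P}(l)$ is exactly the set displayed above. Because $H$ acts diagonally, the twist splits as a product $Z\cong(Q_0\times^H P_l)\times_l(Q_0\times^H V_{0,l})$. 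Here $Q_0\times^H P_l$ is the twist of the $G$-torsor $P_l$ by $Q_0$ along $\varphi$; since $\varphi_*Q_0\cong P_l$ this $G$-torsor is trivial, whence $Q_0\times^H P_l\cong G_l$. The other factor $Q_0\times^H V_{0,l}$ is a dense open subset of the twisted vector space $Q_0\times^H V_l$, which is an $l$-form of $V_l$ classified by a class in $H^1(l,GL(V))$; by Hilbert's Theorem $90$ that class is trivial, so $Q_0\times^H V_l\cong\A^{\dim V}_l$ and $Q_0\times^H V_{0,l}$ is a dense open in affine space.

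It remains to produce enough rational points. By the splitting, $Z\cong G_l\times U$ with $U$ a dense open in $\A^{\dim V}_l$, so its identity component is $Z^0\cong G^0_l\times U$, which is geometrically integral. For $l$ infinite, $U$ has Zariski-dense $l$-points, and provided $G^0$ is unirational over $l$ — where the only genuinely non-formal input sits — so does $G^0_l$; hence $Z^0(l)$ is dense in $Z^0$. I then check that $Z^0$ dominates the irreducible variety $(X_{\varphi,P})_l$, which amounts to $\varphi(H)$ meeting every connected component of $G$; this is exactly the geometric integrality of $X_{\varphi,P}$, a property read off the construction and automatic when $G$ is connected. The image of the dense set $Z^0(l)$ under the dominant morphism $Z^0\to(X_{\varphi,P})_l$ is then dense, and since it lies in $\{x:\mathcal T_x\cong t\}$, the latter is dense, which proves the proposition. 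The two points I would treat most carefully are the bookkeeping of the twisting conventions (left/right actions and the well-definedness of the splitting isomorphism) and the exact hypothesis making $G^0(l)$ dense — automatic in the intended application, where $A$, and hence the groups $H$ and $G$, are of multiplicative type and so connected and unirational.
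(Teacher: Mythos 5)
Your argument is correct in substance but follows a genuinely different route from the paper's. The paper's proof is much shorter: it invokes the standard versality of the torsor $\mathcal Q \to V_0/H$ (Hilbert's Theorem 90 for $\mathbf{GL}(V)$, as in Serre's construction of versal torsors) to produce a Zariski-dense set of points $y \in (V_0/H)(l)$ with $y^*\mathcal Q \cong Q_0$, and then uses the universal property of $X_{\varphi,P}$ over $V_0/H$ to lift each such $y$ to an $l$-point of $X_{\varphi,P}$ whose fiber of $\mathcal T_{\varphi,P}$ realizes the given triangle $t$. You instead twist the total space $P \times_k V_0 \to X_{\varphi,P}$ by $Q_0$ and analyse the resulting variety $Z$ directly; Hilbert 90 enters in the same way (to trivialize the twisted form of $V$), but your version buys a genuine density statement \emph{in $X_{\varphi,P}$ itself}, whereas the paper's argument literally only yields density of the image in $V_0/H$ (enough for the ``in particular'', which is all that is used later). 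The price is the extra hypotheses you flag (unirationality of $G^0$ over $l$, and $\varphi(H)$ meeting every component of $G$), which the proposition as stated does not impose but which are implicit in the requirement that $X_{\varphi,P}$ be geometrically integral.

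Two corrections. First, your description of the intended application is wrong: there $G=\Aut(\mathcal E)$ and $H=\Aut(\mathcal D)$ are \emph{finite constant} groups (the paper says so explicitly), not groups of multiplicative type, and finite groups of multiplicative type are in any case not connected. Your hypotheses do hold there, but because $G^0$ is trivial and $\varphi$ is surjective (Lemma \ref{lem equiv diag}), not for the reason you give. Second, two of your identifications are only true up to inner twisting: $Q_0\times^H P_l$ is a trivial torsor under ${}^{Q_0}G$ (an inner form of $G_l$ via conjugation through $\varphi$), not canonically $G_l$, and $Z \to (X_{\varphi,P})_l$ is a torsor under ${}^{Q_0}H$ rather than $H$. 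Neither affects surjectivity or the dimension count, but the density of $Z^0(l)$ then requires unirationality of $({}^{Q_0}G)^0$ rather than of $G^0$; for the finite groups of the application this is vacuous. Finally, your reduction of ``$\mathcal T_x \cong t$'' to ``$x^*\mathcal Q' \cong Q_0$'' uses the convention that an isomorphism of lifting triangles may act on $P$ by an arbitrary automorphism; the paper leaves this ``obvious notion'' unspecified, and its own proof exhibits isomorphisms that are the identity on $P$, so you should state the convention you adopt.
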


\begin{proof}
Let $l/k$ be a field extension with $l$ infinite.
Let 
\[
\xymatrix{t:
Q \ar[r]^f \ar[dr]_{H_l} & P \times_k l \ar[d]^{G_l} \\
& \Spec(l) \, 
}
\]
be a lifting triangle, over $l$. By Hilbert's Theorem 90 (for $\mathbf{GL}_k(V)$), the set of $l$-rational points $$x \in (V_0/H)(l)= \Hom_{k-\mathrm{sch}}(\Spec (l), V_0/H)$$ such that $x^*(\mathcal Q)$ is isomorphic to $Q$ (as $H$-torsors over $l$)  is Zariski-dense. Let $x$ be such a point.  Then the lifting triangle $t$  corresponds to an isomorphism of $G$-torsors between $\varphi_*( Q )$ and $P$, over the base $\Spec(l)$. Since $Q$ is isomorphic to $x^*(\mathcal Q)$, the universal property of $X_{\varphi, P}$  implies that the lifting triangle $t$ is isomorphic to the fiber of $\mathcal{T}_{\varphi, P}$ at an $l$-rational point of $X_{\varphi, P}$. This finishes the proof. 



\end{proof}

\begin{lemma} \label{lem smooth rat}
The $k$-variety $X_{\varphi, P}$ is smooth and geometrically unirational if $\varphi : H \to G$ is surjective, or if $G$ is smooth and connected.
\end{lemma}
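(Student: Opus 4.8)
The plan is to check both assertions after base change to an algebraic closure $\overline{k}$, since smoothness, geometric unirationality and geometric integrality of $X_{\varphi,P}$ all depend only on $X_{\varphi,P}\times_k\overline{k}$. Over $\overline{k}$ the $G$-torsor $P$ has a rational point, hence is trivial, $P_{\overline{k}}\cong G_{\overline{k}}$, and therefore
$$X_{\varphi,P}\times_k\overline{k}\;\cong\;(G\times V_0)/H,$$
where $H$ acts by $h\cdot(g,v)=(\varphi(h)g,\,hv)$. The two recurring tools will be: (i) descent of smoothness from the total space of an fppf torsor to its base (if $Z\to Y$ is a torsor under a finite type affine $k$-group and $Z$ is smooth over $k$, then $Y$ is smooth over $k$, the torsor projection being faithfully flat and locally of finite presentation); and (ii) the elementary fact that a variety admitting a dominant rational map from a unirational variety is itself unirational.

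First I would treat the case where $\varphi$ is surjective, setting $N:=\Ker(\varphi)$. Since a surjection of algebraic groups is faithfully flat, $\varphi$ identifies $G$ with $H/N$, the $H$-action on $G\cong H/N$ becoming left translation, and the standard identification of quotients $\big((H/N)\times V_0\big)/H\cong V_0/N$ (sending the class of $(hN,v)$ to the class of $h^{-1}v$) yields
$$X_{\varphi,P}\times_k\overline{k}\;\cong\;V_0/N.$$
Because $V_0\to V_0/H$ is an $H$-torsor, the subgroup $N$ acts freely on $V_0$, so the intermediate geometric quotient $V_0\to V_0/N$ exists and is an $N$-torsor; smoothness of $V_0/N$ then follows from (i), $V_0$ being a smooth open subset of $\A(V)$. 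Finally $V_0/N$ is dominated by the rational variety $V_0$, hence is unirational by (ii), and it is geometrically integral since $V_0$ is.

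Next I would treat the case where $G$ is smooth and connected. Then the $G$-torsor $P$ is smooth over $k$, so $Z:=P\times_k V_0$ is smooth over $k$, and the quotient map $Z\to X_{\varphi,P}$ is an fppf $H$-torsor; smoothness of $X_{\varphi,P}$ follows from (i). For unirationality I pass to $\overline{k}$, where $Z_{\overline{k}}\cong G_{\overline{k}}\times V_0$ is unirational, since a smooth connected linear algebraic group over an algebraically closed field is unirational (indeed rational) and $V_0$ is open in affine space; as $Z\to X_{\varphi,P}$ is dominant, (ii) shows $X_{\varphi,P}$ is geometrically unirational. Connectedness of $G$ guarantees that $Z$, and hence $X_{\varphi,P}$, is geometrically integral.

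The only genuinely delicate point is the surjective case when $G$, and thus $P$, fails to be smooth: there the projection $\pi\colon X_{\varphi,P}\to V_0/H$ has non-smooth fibres, so smoothness is invisible from $\pi$, and one really must go through the identification $X_{\varphi,P}\times_k\overline{k}\cong V_0/N$ together with the \emph{free} action of $N$ on the smooth variety $V_0$. The remaining care lies in justifying the descent of smoothness along fppf torsors (tool (i)) and the existence of the intermediate quotient $V_0/N$; both are standard, the latter because a torsor under $H$ may be divided by any closed subgroup.
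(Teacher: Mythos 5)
Your proof is correct and follows essentially the same route as the paper's: reduce to $\overline{k}$ where $P$ trivializes, identify $X_{\varphi,P}$ with $V_0/\Ker(\varphi)$ in the surjective case and use the $H$-torsor $P\times_k V_0\to X_{\varphi,P}$ in the smooth connected case, then descend smoothness and unirationality from the total space. You simply spell out the descent arguments and the quotient identification $\bigl((H/N)\times V_0\bigr)/H\cong V_0/N$ that the paper leaves implicit.
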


\begin{proof}
To prove this, we can assume that  $k=\bar k$,  in which case the torsor $P$ is trivial. Then $X_{\varphi, P}=(G \times V_0)/H$ . If $G$ is smooth and connected, then it is $k$-rational. Hence  $G \times V_0$ is smooth, connected and $k$-rational as well. The  quotient morphism $$ G \times V_0  \longrightarrow X_{\varphi, P}$$ is an $H$-torsor, and smoothness and geometrical unirationality of its total space implies that of its base.\\
Now, assume that $\varphi$ is surjective. Denoting by $K$ its kernel, we see that $X_{\varphi, P}=V_0 /K$,  which implies the result. 
\end{proof}

\section{Triviality of Yoneda extensions in Abelian categories.}

Let $\mathcal{A}$ be an Abelian category.

The following lemma is well-known.

\begin{lemma} \label{lem ext triv}
Let $\mathcal{E} = (0 \to B \xrightarrow{f_0} E_1 \xrightarrow{f_1} \dots \to E_{n-1} \xrightarrow{f_{n-1}} E_n \xrightarrow{f_n} A \to 0)$ be an object in $\YExt^n(A,B)$, and let $e$ denote its class in $\Yext^n(A,B)$. 

Then $e = 0$ in $\Yext^n(A,B)$ if and only if there exists $\mathcal{F}$ in $\YExt^{n-1}(E_n,B)$ and a morphism of complexes $\phi : \mathcal{E} \to \mathcal{F}$ inducing the identity on $B$ and $E_n$, i.e. a commutative diagram (with exact rows)
\begin{equation} \label{diag split}
\xymatrix{
0 \ar[r] & B \ar[r]^{f_0} \ar[d]^{\id} & E_1 \ar[r]^{f_1} \ar[d]^{\phi_1} & \dots \ar[r] & E_{n-1} \ar[r]^{f_{n-1}} \ar[d]^{\phi_{n-1}} & E_n \ar[d]^{\id} \ar[r]^{f_n} & A \ar[r] & 0 \\
0 \ar[r] & B \ar[r] & F_1 \ar[r]^{g_1} & \dots \ar[r] & F_{n-1} \ar[r]^{g_{n-1}} & E_n \ar[r] & 0 & \, . 
}
\end{equation}
\end{lemma}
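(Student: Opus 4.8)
The plan is to prove both directions of the equivalence using the standard homological characterization of vanishing Yoneda classes, which amounts to recognizing that a Yoneda $n$-extension represents the zero class precisely when its associated long exact sequence can be ``cut'' into two pieces that are each split in the appropriate sense.

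First I would treat the direction that assumes $e = 0$. Recall that the group law on $\Yext^n(A,B)$ is defined via Baer-type operations (splicing and Yoneda equivalence), and that $e = 0$ means $\mathcal{E}$ is Yoneda-equivalent to the trivial $n$-fold extension, i.e. the one that factors through a split short exact sequence. Concretely, one interprets $\Yext^n$ as $\Hom_{\mathcal{D}(\mathcal{A})}(A, B[n])$ as indicated in the earlier remark: the class $e$ is a morphism $A \to B[n]$ in the derived category, and $e = 0$ exactly when this morphism is zero. The extension $\mathcal{E}$ gives a roof diagram factoring the connecting map. The idea is to truncate: the subcomplex $0 \to B \to E_1 \to \dots \to E_{n-1} \to \im(f_{n-1}) \to 0$ represents a class in $\Yext^{n-1}(\im(f_{n-1}), B)$, and the short exact sequence $0 \to \im(f_{n-1}) \to E_n \to A \to 0$ represents a class in $\Yext^1(A, \im(f_{n-1}))$, whose composite (Yoneda product) is $e$. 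Vanishing of $e$ lets me build the comparison extension $\mathcal{F} \in \YExt^{n-1}(E_n, B)$ by ``absorbing'' the final term: the map $\phi$ is constructed degree by degree using the universal / lifting property of pushouts and pullbacks in an Abelian category, starting from the identity on $B$ and extending $\phi_i$ along the exact rows.

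For the converse, I would assume the diagram \eqref{diag split} exists and deduce $e = 0$. The key observation is that the bottom row $\mathcal{F}$, being an $(n-1)$-extension of $B$ by $E_n$, when spliced with the trivial (identity) extension $0 \to E_n \xrightarrow{\id} E_n \to 0 \to 0$ in the last slot, produces an $n$-extension of $A$ by $B$ that is Yoneda-equivalent to $\mathcal{E}$ via $\phi$ but which manifestly factors through $E_n \xrightarrow{\id} E_n$, and therefore through a split short exact sequence at the right end. Splitting at the tail forces the spliced class to be zero in $\Yext^n$. More precisely, the existence of $\phi$ inducing the identity on the two outer terms exhibits $\mathcal{E}$ as equivalent to the pushforward/pullback of the extension $\mathcal{F}$ along a map that kills the class.

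The main obstacle I expect is bookkeeping rather than conceptual: the delicate point is to make the degree-by-degree construction of $\phi$ in the forward direction rigorous without invoking elements, since $\mathcal{A}$ is an arbitrary Abelian category. I would handle this either by embedding $\mathcal{A}$ into a category of modules via a Freyd--Mitchell-type argument (so that ordinary diagram chases are legal) or, more intrinsically, by phrasing everything through the derived-category description: a class in $\Hom_{\mathcal{D}(\mathcal{A})}(A, B[n])$ vanishes iff a representing map of complexes is null-homotopic after localizing at quasi-isomorphisms, and the homotopy data is exactly encoded by the comparison morphisms $\phi_i$. Since the excerpt explicitly flags this lemma as ``well-known,'' I would keep the proof brief, reducing to the short-exact-sequence case $n=1$ (where $e=0$ iff the sequence splits, and the diagram is the splitting data) and then proceeding by dévissage on $n$, using the splicing decomposition described above together with the already-treated case $n-1$.
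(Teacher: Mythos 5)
Your converse direction is essentially right (and it is the easy half): given the diagram \eqref{diag split}, the class of the truncation $0 \to B \to E_1 \to \dots \to E_{n-1} \to \im(f_{n-1}) \to 0$ is pulled back from $\Yext^{n-1}(E_n,B)$ along $\im(f_{n-1}) \hookrightarrow E_n$, hence is killed by the connecting map associated to $0 \to \im(f_{n-1}) \to E_n \to A \to 0$, which is exactly $e$. The paper instead writes down the three-row diagram explicitly with $G_n := E_n \oplus A$; either works.

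The forward direction, however, has a genuine gap. Your splicing/long-exact-sequence argument shows that $e=0$ forces the class of the truncated $(n-1)$-extension in $\Yext^{n-1}(\im(f_{n-1}),B)$ to lie in the image of $\iota^*$ from $\Yext^{n-1}(E_n,B)$ --- but that is an equality of \emph{Yoneda classes}, whereas the Lemma demands an actual \emph{morphism of complexes} $\phi: \mathcal{E} \to \mathcal{F}$ inducing the identity on $B$ and $E_n$. Yoneda equivalence is only the equivalence relation \emph{generated} by such direct morphisms (a zigzag in general), so "the classes agree" does not hand you $\phi$; producing it is the actual content of the Lemma, not bookkeeping. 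Your two proposed remedies do not close this: Freyd--Mitchell only legalizes element chases without telling you what to construct, and in the derived-category picture the null-homotopy witnessing $e=0$ lives on a quasi-isomorphic \emph{replacement} of the complex $[B \to E_1 \to \dots \to E_n]$, not on that complex itself, so the homotopy data is not literally the maps $\phi_i$. Likewise the proposed d\'evissage on $n$ fails at the first step: $e$ is the Yoneda product of an $(n-1)$-class and a $1$-class, and vanishing of the product gives no vanishing of either factor, so the inductive hypothesis does not apply. The paper avoids all of this by invoking the precise Oort/Bourbaki characterization of $e=0$ (the three-row diagram \eqref{diag ext triv}), which supplies a splitting $s$ of $G_n \to A$, hence a retraction of $K' := \Ker(G_n \to A) \to G_n$ and a map $E_n \to K'$; the desired $\mathcal{F}$ is then the explicit pullback of $0 \to B \to G_1 \to \dots \to G_{n-1} \to K' \to 0$ along $E_n \to K'$, and $\phi$ comes for free from the given middle row. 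You would need either this construction or a proof of the "common target" refinement of Yoneda equivalence together with an extension-along-$\iota$ step to make your route rigorous.
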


\begin{proof}
By \cite{Oort}, section 2 (see also \cite{Bou}, section 7.5, theorem 1, in the case of categories of modules), $e = 0$ is and only if there exists a commutative diagram
\begin{equation} \label{diag ext triv}
\xymatrix{
0 \ar[r] & B \ar[r] \ar[d] & E_1 \ar[r] \ar[d] & \dots \ar[r] & E_{n-1} \ar[r] \ar[d] & E_n \ar[r] \ar[d] & A \ar[r] \ar[d]^= & 0 \\
0 \ar[r] & B \ar[r] & G_1 \ar[r] & \dots \ar[r] & G_{n-1} \ar[r] & G_n \ar[r] & A \ar[r] & 0 \\
0 \ar[r] & B \ar[r]^\id \ar[u] & B \ar[r] \ar[u] & 0 \dots 0 \ar[r] & 0 \ar[r] \ar[u] & A \ar[r]^\id \ar[u] & A \ar[r] \ar[u]^= & 0 \, . \\
}
\end{equation}

Assume $e = 0$. In the previous diagram, let $K' := \Ker(G_n \to A)$. Since we are given a splitting $s$ of $G_n \to A$, there is a natural map $E_n \to K'$ defined via the retraction of $K' \to G_n$ associated to $s$. Define $\mathcal{F}$ to be the pull-back of the exact sequence 
$$0 \to B \to G_1 \to \dots \to G_{n-1} \to K' \to 0$$
by the aforementionned morphism $E_n \to K'$. It is now clear that $\mathcal{F}$ satisfies the statement of the Lemma.

To prove the converse, assume the existence of $\mathcal{F}$ and $\phi$ as in the Lemma. Define $F_i := G_i$ for all $i \leq n-1$, and $G_n := E_n \oplus A$. Consider the maps $h_i := g_i$ for $i \leq n-2$, and let $h_{n-1} :=g_{n-1} \oplus 0 : G_{n-1} \to G_n = E_n \oplus A$ and $h_n : G_n = E_n \oplus A \to A$ be the natural projection. Then the morphism $\phi$ together with the map $\id \oplus f_n : E_n \to G_n = E_n \oplus A$ defines a commutative diagram of the shape \eqref{diag ext triv}, hence $e=0$.
\end{proof}


\begin{definition}
Given $\mathcal{E} \in \YExt^n(A,B)$ as in Lemma \ref{lem ext triv}, a $\mathcal{E}$-diagram is a pair $(\mathcal{F}, \phi)$, where $\mathcal{F} \in \YExt^{n-1}(E_n,B)$ and $\phi : \mathcal{E} \to \mathcal{F}$ is a morphism of complexes inducing the identity on $B$ and $E_n$ (see diagram \eqref{diag split}). Such a diagram is called injective if $\phi_i$ is a monomorphism for all $i$.
\end{definition}

We denote by $\DIAG(\mathcal{E})$ (or $\DIAG_\mathcal{A}(\mathcal{E})$) the category of $\mathcal{E}$-diagrams, where a morphism between $(\mathcal{F}, \phi)$ and $(\mathcal{F}', \phi')$ is a morphism between the commutative diagrams associated (as in Lemma \ref{lem ext triv}) to both $\mathcal{E}$-diagrams, and by $\Diag(\mathcal{E})$ the set of isomorphism classes in $\DIAG(\mathcal{E})$.

Note that, given $\mathcal{D} = (\mathcal{F}, \phi) \in \DIAG(\mathcal{E})$, there is a natural group homomorphism $\Aut(\mathcal{D}) \to \Aut(\mathcal{E})$. 

\begin{example} \label{ex mkd}
Consider the particular case when $\mathcal{A}$ is the category $\mathcal{M}_{k,d}$. 
Recall the obvious functor $\mathcal{M}_{k,d} \to \mathcal{M}_d$.

Then an object $\mathcal{E}$ of the category $\YExt^n_{k,d}(A,B)$ is exactly the same as an object $\mathcal{E}'$ in $\YExt^n_{d}(A,B)$ together with a (continuous) group homomorphism $p : \Gamma \to \Aut(\mathcal{E}')$.

Moreover, a $\mathcal{E}$-diagram $\mathcal{D}$ in the category $\mathcal{M}_{k,d}$ is the same as a $\mathcal{E}'$-diagram $\mathcal{D}'$ in the category $\mathcal{M}_d$ together with a homomorphism $q : \Gamma \to \Aut(\mathcal{D}')$ lifting $p$.

Note that in this context, the groups $\Aut(\mathcal{D}')$ and $\Aut(\mathcal{E}')$ are finite.
\end{example}

\section{Splitting varieties for $2$-extensions}

In this section, we restrict to the special case of $\Yext^2_k(A,B)$ and
$H^2(k,A)$, and we construct splitting varieties.

\begin{theorem} \label{thm deg 2}
Let $A, B$ be a finite $d$-torsion $\Gamma$-modules and $e \in \Yext^2_{k,d}(A,B)$.

Assume $A$ or $B$ is free as a $\F$-module.

Then, there exists a smooth geometrically integral $k$-variety $X$ which is a splitting variety for $e$.
\end{theorem}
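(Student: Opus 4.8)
The plan is to reduce the vanishing of the $2$-extension class $e$ to the existence of a rational point on a lifting variety, by encoding a $\mathcal{E}$-diagram as a lifting of a torsor along a morphism of finite algebraic $k$-groups. By Lemma \ref{lem ext triv}, specialized to $n=2$ as in Example \ref{ex mkd}, the class $e$ vanishes over an extension $l/k$ precisely when there exists an $\mathcal{E}$-diagram over $l$, i.e. a commutative diagram of the shape \eqref{diag split} with $n=2$. In the language of Example \ref{ex mkd}, if $\mathcal{E}'$ denotes the underlying $2$-extension in $\mathcal{M}_d$ and $p : \Gamma \to \Aut(\mathcal{E}')$ the associated homomorphism, then an $\mathcal{E}$-diagram over $l$ is the same as a lift of $p|_{\Gamma_l}$ through the natural map $\Aut(\mathcal{D}') \to \Aut(\mathcal{E}')$, where $\mathcal{D}'$ ranges over (isomorphism classes of) $\mathcal{E}'$-diagrams in $\mathcal{M}_d$. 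The key point, to be exploited, is that the hypothesis that $A$ or $B$ is free as an $\F$-module (where $\F = \F_p$ or the relevant field $\Z/d\Z$) should guarantee that a \emph{single} isomorphism class $\mathcal{D}'$ of splitting diagrams suffices, so that this condition becomes the single-group lifting condition for one fixed $\mathcal{D}'$.

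First I would fix $\mathcal{E}'$ and analyze $\Diag(\mathcal{E}')$ over the separable closure: under the freeness hypothesis, I expect to show that $\DIAG(\mathcal{E}')$ is nonempty and that all its objects are, in a suitable sense, equivalent, so that the obstruction to splitting $e$ over $l$ is exactly the obstruction to lifting the representation $p : \Gamma \to \Aut(\mathcal{E}')$ to $\Aut(\mathcal{D}')$ for one canonical choice of $\mathcal{D}'$. Concretely, set $G := \Aut(\mathcal{E}')$ and $H := \Aut(\mathcal{D}')$, viewed as finite (constant, after base change) $k$-group schemes carrying the $\Gamma$-action coming from $p$; the homomorphism $\varphi := (\Aut(\mathcal{D}') \to \Aut(\mathcal{E}'))$ becomes a morphism $\varphi : H \to G$ of finite algebraic $k$-groups, and the class $p$ defines a $G$-torsor $P$ over $\Spec(k)$. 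The content of Remark \ref{compcoh} and Example \ref{ex mkd} is that $\Res_{l/k}(e) = 0$ if and only if $[P_l] \in H^1(l,G)$ lifts along $\varphi_{l,*}$ to $H^1(l,H)$.

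With $\varphi : H \to G$ and $P$ in hand, I would invoke the construction of the previous section: let $X := X_{\varphi,P}$ be the lifting variety for the pair $(\varphi, P)$ provided by Proposition \ref{lem versal lift}. By that proposition, for every infinite field extension $l/k$ the variety $X$ has an $l$-point if and only if $[P_l]$ lies in the image of $\varphi_{l,*}$, which by the previous paragraph is equivalent to $\Res_{l/k}(e) = 0$. Thus $X$ is a splitting variety for $e$, in the sense defined above. It remains to ensure that $X$ is smooth and geometrically integral: smoothness and geometric unirationality (hence, after the standard argument, geometric integrality, since $V_0$ is geometrically integral and $X$ is a twisted quotient of the geometrically integral $P \times_k V_0$) follow from Lemma \ref{lem smooth rat}, whose hypotheses are met because $H$ and $G$ are finite and $\varphi$ will be arranged to be surjective—this surjectivity is exactly where I expect the freeness of $A$ or $B$ to be used.

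The main obstacle is the step packaging the splitting condition into a single group-theoretic lifting problem: in general $\Diag(\mathcal{E}')$ may consist of several isomorphism classes of diagrams, and the vanishing of $e$ over $l$ corresponds to lifting $p|_{\Gamma_l}$ along \emph{some} $\Aut(\mathcal{D}') \to \Aut(\mathcal{E}')$, which a priori only yields a countable \emph{family} of splitting varieties rather than a single one. The crux of the degree-$2$ case is therefore to prove, using the hypothesis that $A$ or $B$ is $\F$-free, that there is a \emph{canonical} (maximal, or universal) $\mathcal{E}'$-diagram $\mathcal{D}'$ through which every other splitting diagram factors, so that liftability along this one $\varphi$ is equivalent to the vanishing of $e$; equivalently, that the surjection $\varphi : H \to G$ can be chosen so that its liftability criterion captures all diagrams at once. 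Verifying this factorization property of $\mathcal{E}'$-diagrams—essentially a structural statement about $2$-extensions of $\F$-free modules—is the heart of the argument, and once it is in place the geometric output is immediate from Proposition \ref{lem versal lift} and Lemma \ref{lem smooth rat}.
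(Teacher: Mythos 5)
Your overall architecture is exactly the paper's: encode the splitting of $e$ over $l$ as the liftability of the torsor $P_{\mathcal{E}}$ along $\varphi : \Aut(\mathcal{D}) \to \Aut(\mathcal{E})$ for a single $\mathcal{E}$-diagram $\mathcal{D}$, take $X = X_{\varphi, P_{\mathcal{E}}}$ from Proposition \ref{lem versal lift}, and get smoothness and geometric unirationality from Lemma \ref{lem smooth rat} via surjectivity of $\varphi$. You have also correctly located the one non-formal point, namely that a single isomorphism class of diagrams must suffice. But you leave precisely that point unproved --- you only say you ``expect to show'' it --- and without it the argument is incomplete: a priori the vanishing of $e$ over $l$ only asks for a lift along $\Aut(\mathcal{D}') \to \Aut(\mathcal{E})$ for \emph{some} diagram $\mathcal{D}'$, which would yield a countable family rather than one variety.

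The paper closes this gap in two steps that your proposal does not contain. First (Lemma \ref{lem E_n free}), after using Pontryagin duality to make $B$ free, one replaces the representative $\mathcal{E}$ so that $E_2$ is \emph{also} free as an $\F$-module; this is essential, because the middle term $F_1$ of any $\mathcal{E}$-diagram sits in an exact sequence $0 \to B \to F_1 \to E_2 \to 0$ and is guaranteed to be free only when both ends are --- freeness of $B$ alone does not suffice for the paper's argument. Second, once $F_1$ is forced to be free, and observing that for $n=2$ the map $\phi_1 : E_1 \to F_1$ is automatically injective (its kernel lies in $B = \Ker(E_1 \to E_2)$, on which $\phi$ is the identity), Lemma \ref{lem equiv diag} shows that any two such diagrams differ by an automorphism of the target, so $\Diag(\mathcal{E})$ is a single isomorphism class; the same lemma gives the surjectivity of $\Aut(\mathcal{D}) \to \Aut(\mathcal{E})$ that you need for Lemma \ref{lem smooth rat}. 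Note finally that the statement actually required is not your ``canonical maximal diagram through which every other factors'' --- that cofinality picture is what drives the $n \geq 3$ case and only produces an ind-variety --- but the stronger and simpler fact that, for $n=2$ with $B$ and $E_2$ free, all $\mathcal{E}$-diagrams are isomorphic.
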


\begin{proof}
Let $\mathcal{E} = (0 \to B \to E_1 \to E_2 \to A \to 0)$ be a $2$-extension of $d$-torsion $\Gamma$-modules representing $e$. Using Pontryagin duality $\Hom(., \F)$, one can assume $B$ is free. Lemma \ref{lem E_n free} below implies that one can also assume that $E_2$ is free as a $\F$-module.

A $\mathcal{E}$-diagram in $\mathcal{M}_d$ is a commutative diagram with exact lines in the category of finite $d$-torsion abelian groups:
\begin{equation} \label{diag deg 2}
\xymatrix{
0 \ar[r] & B \ar[r] \ar[d]^{\id}_\sim & E_1 \ar[r] \ar[d]^{\phi_1} & E_2 \ar[r] \ar[d]^{\id}_\sim & A \ar[r] & 0 \\
0 \ar[r] & B \ar[r] & F_1 \ar[r] & E_2 \ar[r] & 0 & \, .
}
\end{equation}
In particular, in such a diagram, $F_1$ is free as a $\F$-module. Therefore Lemma \ref{lem equiv diag} below implies that there is a unique such diagram, say $\mathcal{D}$, up to isomorphism.

The $2$-extension $\mathcal{E}$ defines a group homomorphism $p : \Gamma \to \Aut(\mathcal{E}):=\Aut_{\mathcal{M}_d}(\mathcal{E})$ (see example \ref{ex mkd}), so that $p$ corresponds to a $\Spec(k)$-torsor $P_{\mathcal{E}}$ under $\Aut(\mathcal{E})$.

Then Example \ref{ex mkd} relates the triviality of the class $e$ to the existence of a lifting of the torsor $P_{\mathcal{E}}$ to the group $\Aut(\mathcal{D})$.

Let $X$ be the lifting variety $X_{\varphi, P_{\mathcal{E}}}$ for the natural morphism of finite groups $\varphi : \Aut(\mathcal{D}) \to \Aut(\mathcal{E})$, where those groups are considered as constant algebraic $k$-groups.

Then Example \ref{ex mkd} and Proposition \ref{lem versal lift} imply that $X$ is a splitting variety for $e$.
\end{proof}

\begin{corollary}\label{cor cohom deg 2}
Let $A$ be a finite $d$-torsion $\Gamma$-modules and $\alpha \in H^2(k,A)$.

Then, there exists a smooth geometrically integral $k$-variety $X$ which is a splitting variety for $\alpha$.
\end{corollary}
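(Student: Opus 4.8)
The plan is to deduce this corollary from Theorem \ref{thm deg 2} by translating the cohomological statement into the language of Yoneda $2$-extensions. First I would let $d$ denote the exponent of $A$ (so that $A$ lies in $\mathcal{M}_{k,d}$) and invoke Remark \ref{compcoh}, which provides a canonical isomorphism
\[
\Yext^2_{k,d}(\Z / d \Z, A) \xrightarrow{\sim} H^2(\Gamma, A) = H^2(k, A).
\]
Let $e \in \Yext^2_{k,d}(\Z / d \Z, A)$ be the class corresponding to $\alpha$ under this isomorphism.

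The crucial observation is that the freeness hypothesis of Theorem \ref{thm deg 2} is satisfied here for free. In the pair $(\Z / d \Z, A)$ playing the roles of $(A, B)$ in that theorem, the first module $\Z / d \Z$ is free of rank one over $\F = \Z / d \Z$. Hence Theorem \ref{thm deg 2} applies to $e$ and produces a smooth geometrically integral $k$-variety $X$ which is a splitting variety for $e$.

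It then remains to transfer this conclusion from $e$ back to $\alpha$. This is where the only genuine (though minor) point lies: one must check that the isomorphism of Remark \ref{compcoh} is compatible with the restriction maps $\Res_{l/k}$ for every field extension $l/k$, i.e.\ that it is natural in the base field. Granting this naturality, for any infinite extension $l/k$ the class $\Res_{l/k}(\alpha)$ vanishes in $H^2(l, A)$ if and only if $\Res_{l/k}(e)$ vanishes in $\Yext^2_{l,d}(\Z / d \Z, A)$, which by the defining property of a splitting variety happens if and only if $X(l) \neq \emptyset$. This is precisely the assertion that $X$ is a splitting variety for $\alpha$, and the proof concludes.

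I expect the main obstacle to be essentially bookkeeping rather than conceptual: all the geometric content has already been carried by Theorem \ref{thm deg 2}, so the work here reduces to confirming the compatibility of the identification in Remark \ref{compcoh} with base change, and to noticing that the freeness requirement costs nothing since one of the two arguments is the free module $\Z / d \Z$.
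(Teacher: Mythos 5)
Your proposal is correct and follows exactly the paper's own route: identify $H^2(k,A)$ with $\Yext^2_{k,d}(\Z/d\Z,A)$ via Remark \ref{compcoh}, observe that $\Z/d\Z$ is free over $\F$ so the hypothesis of Theorem \ref{thm deg 2} is automatic, and transfer the splitting-variety property back through the (natural) isomorphism. Your explicit attention to compatibility with $\Res_{l/k}$ is a point the paper leaves implicit, but it is the same argument.
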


\begin{remark}
This corollary recovers a result of Krashen (see \cite{Kra}).
\end{remark}

\begin{proof}
By Remark \ref{compcoh}, we have a canonical isomorphism $\Yext^2_{k,d}(\F,A) \xrightarrow{\sim} H^2(k,A)$, hence the Corollary is a direct consequence of Theorem \ref{thm deg 2}.
\end{proof}

\begin{exo} (hard)
Let $R/k$ be a central simple algebra, of index $d$. Assume that $d$ is invertible in $k$. In the previous Corollary, take $A$ to be $\mu_d$, the group of $d$-th roots of unity, and take $\alpha \in H^2(k,A)=H^2(k,\mu_d)$ to be the Brauer class of $R$. Show that its splitting variety $X$, as constructed in the proof of Theorem  \ref{thm deg 2}, is stably birational to the Severi-Brauer variety $\mathrm{SB}(R)$.
\end{exo}

\section{Splitting families for $n$-extensions ($n \geq 3$)}

In this section, we prove the main Theorem of the paper (see Theorem \ref{prop norm ext} below).

Let $d \geq 2$ and $\F := \Z / d \Z$.

Let $n \geq 2$ and let $A, B$ be objects of $\mathcal M_{k,d}$. 

Fix a class $e \in \Yext^n_{k,d}(A,B)$.

\begin{lemma} \label{lem E_n free}
There exists a representative $\mathcal{E} = (0 \to B \xrightarrow{f_0} E_1 \xrightarrow{f_1} \dots \to E_{n-1} \xrightarrow{f_{n-1}} E_n \xrightarrow{f_n} A \to 0)$ of $e$ in $\YExt^n_{k,d}(A,B)$ such that $E_n$ is free.
\end{lemma}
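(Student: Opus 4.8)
The plan is to replace the terminal term $E_n$ of a representative of $e$ by a free $\F$-module, the crux being that pulling $e$ back along its own terminal surjection is trivial; the remaining steps are the standard formalism of Yoneda products and the long exact sequence of $\Yext$-groups, valid in any abelian category.

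Fix a representative $\mathcal{E} = (0 \to B \to E_1 \to \cdots \to E_{n-1} \xrightarrow{f_{n-1}} E_n \xrightarrow{f_n} A \to 0)$ of $e$ in $\YExt^n_{k,d}(A,B)$. Since $E_n$ is a finite discrete $\Gamma$-module, its $\Gamma$-action factors through a finite quotient $G$ of $\Gamma$, so that $E_n$ is a finite $\F[G]$-module and hence a quotient of $\F[G]^m$ for some $m$. I set $F := \F[G]^m$, which is a finite free $\F$-module carrying a continuous $\Gamma$-action, and let $\rho : F \to E_n$ be the resulting $\Gamma$-equivariant surjection. Putting $\pi := f_n \circ \rho : F \to A$ (a surjection) and $R := \ker \pi$, I obtain a short exact sequence $0 \to R \to F \xrightarrow{\pi} A \to 0$ in $\mathcal{M}_{k,d}$ with $F$ free over $\F$.

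The key step is to show $\pi^*(e) = 0$ in $\Yext^n_{k,d}(F,B)$. Writing $C := \ker f_n = \im f_{n-1}$, the extension $\mathcal{E}$ is the Yoneda splice, at $C$, of the $(n-1)$-extension $u := (0 \to B \to E_1 \to \cdots \to E_{n-1} \to C \to 0)$ with the short exact sequence $\sigma := (0 \to C \to E_n \xrightarrow{f_n} A \to 0)$. Thus $e = \delta_\sigma(u)$, where $\delta_\sigma : \Yext^{n-1}_{k,d}(C,B) \to \Yext^n_{k,d}(A,B)$ is the connecting map of the long exact sequence attached to $\sigma$ (Yoneda product with $[\sigma]$). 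Since $\delta_\sigma$ is immediately followed by $f_n^*$ in that long exact sequence, one has $f_n^*(e) = f_n^*\big(\delta_\sigma(u)\big) = 0$, and therefore $\pi^*(e) = \rho^*\big(f_n^*(e)\big) = 0$ by contravariant functoriality.

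Finally, I feed $\pi^*(e) = 0$ into the long exact sequence of $\Yext^\bullet_{k,d}(-,B)$ attached to $0 \to R \to F \xrightarrow{\pi} A \to 0$: exactness at $\Yext^n_{k,d}(A,B)$ gives $e \in \ker(\pi^*) = \im(\delta_\pi)$, where $\delta_\pi : \Yext^{n-1}_{k,d}(R,B) \to \Yext^n_{k,d}(A,B)$ is Yoneda product with the class of that sequence. Every element of $\im(\delta_\pi)$ is represented by the splice of some $(n-1)$-extension $0 \to B \to E'_1 \to \cdots \to E'_{n-1} \to R \to 0$ with it, namely by an $n$-extension $0 \to B \to E'_1 \to \cdots \to E'_{n-1} \to F \xrightarrow{\pi} A \to 0$ whose terminal term is exactly the free module $F$. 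This is the required representative. I expect the main obstacle to be the verification that $f_n^*(e) = 0$ (equivalently, that the pullback of $\mathcal{E}$ along $f_n$ has a split terminal short exact sequence, via the diagonal $E_n \to E_n \times_A E_n$); once this is in place, the rest is purely formal.
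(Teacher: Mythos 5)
Your argument is correct, but it proceeds by a genuinely different route from the paper's. The paper's proof is a direct, explicit construction: starting from any representative with terminal term $F_n$, it picks a $\Gamma$-equivariant surjection onto $F_n$ from a finite module that is free over $\F$, replaces the penultimate term by the fibre product $F_{n-1}\times_{F_n}(\text{free cover})$, and writes down the morphism of $n$-extensions (identity on $B$, on $F_1,\dots,F_{n-2}$ and on $A$) exhibiting the new complex as an exact representative of the same class; no long exact sequence is invoked. You instead deduce $\pi^*(e)=0$ from $f_n^*\circ\delta_\sigma=0$ and then use exactness of $\Yext^{n-1}_{k,d}(R,B)\to\Yext^{n}_{k,d}(A,B)\to\Yext^{n}_{k,d}(F,B)$ to realize $e$ as a splice ending in $F\xrightarrow{\pi}A$. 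This is valid, with two remarks. First, the long exact sequence for Yoneda Ext in an abelian category with no projectives or injectives (as is the case for $\mathcal{M}_{k,d}$) is itself a standard but nontrivial input, most cleanly obtained from the identification of $\Yext^n(A,B)$ with $\Hom_{\mathcal{D}(\mathcal{A})}(A,B[n])$ recalled in the paper, whereas the fibre-product diagram needs only a routine exactness check. Second, your construction replaces the entire tail $E_1,\dots,E_{n-1}$ by uncontrolled modules $E'_1,\dots,E'_{n-1}$, while the paper's modification touches only the last two terms; that extra control is what makes the Remark following the Lemma (iterating the construction to render $E_2,\dots,E_n$ all free) immediate. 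For the Lemma as stated, both routes succeed.
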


\begin{proof}
Given any representative $\mathcal{E'} = (0 \to B \xrightarrow{f'_0} F_1 \xrightarrow{g_1} \dots \to F_{n-1} \xrightarrow{g_{n-1}} F_n \xrightarrow{g_n} A \to 0)$ of $e$ in $\YExt^n_{k,d}(A,B)$, there exists a finite $d$-torsion $\Gamma$-module $E_n$, free as a $\F$-module, together with a $\Gamma$-equivariant surjection $E_n \to F_n$. Define $F'_{n-1}$ to be $F_{n-1} \times_{F_n} E_n$. Then there is a natural commutative diagram in $\YExt^n_{k,d}(A,B)$
\[
\xymatrix{
0 \ar[r] & B \ar[r]^{g_0} \ar[d]^= & F_1 \ar[r]^{g_1} \ar[d]^{=} & \dots \ar[r] & F_{n-2} \ar[r]^{g_{n-2}} \ar[d]^= & F'_{n-1} \ar[r]^{f_{n-1}} \ar[d] & E_n \ar[r]^{f_n} \ar[d] & A \ar[d]^{=} \ar[r] & 0 \\
0 \ar[r] & B \ar[r]^{g_0} & F_1 \ar[r]^{g_1} & \dots \ar[r] & F_{n-2} \ar[r]^{g_{n-2}} & F_{n-1} \ar[r]^{g_{n-1}} & F_n \ar[r]^{g_{n}} & A \ar[r] & 0 \, , 
}
\]
which proves the Lemma.
\end{proof}

\begin{remark}
Repeating the construction of the proof of Lemma \ref{lem E_n free}, one can even assume that $E_2, \dots, E_n$ are free as $\F$-modules.
\end{remark}

We now fix once and for all a $n$-extension $$\mathcal{E} = (0 \to B \xrightarrow{f_0} E_1 \xrightarrow{f_1} \dots \to E_{n-1} \xrightarrow{f_{n-1}} E_n \xrightarrow{f_n} A \to 0)$$ in $\YExt^n_{k,d}(A,B)$ representing $e$ such that:
\begin{itemize}
	\item $E_n$ is free as a $\F$-module.
    \item $\lvert E_{n-1} \rvert$ is minimal for the given $E_n$.
    \item $\lvert E_{n-2} \rvert$ is minimal among representatives of $e$ with minimal $\lvert E_{n-1} \rvert$ and given $E_n$.
    \item $\lvert E_{n-3} \rvert$ is minimal among representatives of $E$ with minimal $\lvert E_{n-1} \rvert$, minimal $\lvert E_{n-2} \rvert$ and given $E_n$.
    \item \dots
    \item $\lvert E_2 \rvert$ minimal among $n$-extensions representing $E$ with minimal $\lvert E_{n-1} \rvert$, \dots, minimal $\lvert E_3 \rvert$ and given $E_n$.
\end{itemize}

Let us define the notion of a free $n$-extension.

\begin{definition}
An object $$\mathcal{L} = (0 \to D \to L_1 \to \dots \to L_{n-1} \to L_n \to C \to 0)$$ in $\YExt^n_{k,d}(C,D)$ is said to be free if $L_i$ is free as a $\F$-module, for all $1 \leq i \leq n$. 
\end{definition}

\begin{lemma} \label{lem free diag}
Assume that $B$ is free as a $\F$-module. Then the class $e$ is trivial in $\Yext^n_{k,d}(A,B)$ if and only if there exists an injective $\mathcal{E}$-diagram $\phi : \mathcal{E} \to \mathcal{L}$, where $\mathcal{L} \in \YExt^{n-1}_{k,d}(E_n,B)$ is free.
\end{lemma}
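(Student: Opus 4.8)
The plan is to prove the equivalence in Lemma~\ref{lem free diag} by combining the well-known characterization of triviality from Lemma~\ref{lem ext triv} with a ``freeness-upgrading'' procedure. The forward implication is the interesting one: assuming $e$ is trivial, Lemma~\ref{lem ext triv} already hands me \emph{some} $\mathcal{E}$-diagram $\phi : \mathcal{E} \to \mathcal{F}$ with $\mathcal{F} = (0 \to B \to F_1 \to \dots \to F_{n-1} \to E_n \to 0)$ in $\YExt^{n-1}_{k,d}(E_n, B)$. The two defects I must repair are that (i) the intermediate terms $F_i$ need not be $\F$-free, and (ii) the component maps $\phi_i$ need not be injective. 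My strategy is to fix freeness first, then injectivity, being careful that the operations do not destroy each other.

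\textbf{Upgrading to freeness.} To make the $F_i$ free, I would proceed from the top down, mimicking the pullback construction in the proof of Lemma~\ref{lem E_n free}. The rightmost term $E_n$ is already free by our standing hypothesis on the representative $\mathcal{E}$; the leftmost term $B$ is free by assumption. To replace $F_{n-1}$ by a free module, I choose a $\Gamma$-equivariant surjection $L_{n-1} \twoheadrightarrow F_{n-1}$ with $L_{n-1}$ free, form the fibre product of $F_{n-2}$ with $L_{n-1}$ over $F_{n-1}$ at the next term, and propagate the correction leftward. Iterating this, I arrive at a free $(n-1)$-extension $\mathcal{L} = (0 \to B \to L_1 \to \dots \to L_{n-1} \to E_n \to 0)$ together with a morphism of complexes $\mathcal{E} \to \mathcal{L}$ (obtained by composing the original $\phi$ with the comparison maps) that is again the identity on $B$ and on $E_n$. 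This yields a (not yet necessarily injective) free $\mathcal{E}$-diagram.

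\textbf{Upgrading to injectivity.} Given any $\mathcal{E}$-diagram $\phi : \mathcal{E} \to \mathcal{L}$, the standard trick to force each $\phi_i$ to be a monomorphism is to replace $\mathcal{L}$ by its Baer-type sum with a split (hence trivial) free extension: concretely, add to each intermediate term a free module into which $E_i$ injects, and arrange the differentials so that the modified map $(\phi_i, \iota_i) : E_i \to L_i \oplus (\text{free})$ is visibly injective on the second factor while the diagram stays commutative with exact rows and unchanged $B$, $E_n$. The point is that this modification does not disturb freeness (a direct sum of free $\F$-modules is free) and keeps $\mathcal{L}$ in $\YExt^{n-1}_{k,d}(E_n,B)$, so after this second pass I obtain the desired \emph{injective free} $\mathcal{E}$-diagram. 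The converse implication is immediate: the existence of \emph{any} $\mathcal{E}$-diagram, free and injective or not, already forces $e = 0$ by the ``if'' direction of Lemma~\ref{lem ext triv}.

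\textbf{Main obstacle.} The delicate part is carrying the freeness correction and the injectivity correction simultaneously with the $\Gamma$-action, since the surjections from free modules must be chosen $\Gamma$-equivariantly and the two passes must not interfere. Over $\mathcal{M}_d$ freeness is automatic to arrange because every finite $\F$-module is a quotient of a free one; the content is that these choices descend through the equivalence of Example~\ref{ex mkd}, i.e. that the auxiliary free modules and splitting maps can be taken in $\mathcal{M}_{k,d}$ rather than merely in $\mathcal{M}_d$. I expect this $\Gamma$-equivariance bookkeeping, together with verifying that the injectivity trick respects exactness at every spot of the resized complex, to be where the real care is needed, while each individual diagram chase is routine.
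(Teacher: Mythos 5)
Your converse direction and your overall plan (start from Lemma \ref{lem ext triv}, then upgrade to a free, injective diagram) are reasonable, but the first upgrading step is broken as described, and the second one quietly bypasses the actual mechanism the paper uses.

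The freeness pass does not work in the direction you propose. Choosing a surjection $L_{n-1}\twoheadrightarrow F_{n-1}$ from a free module and forming fibre products produces comparison maps \emph{from} the new complex \emph{to} the old one, i.e.\ $\mathcal{L}\to\mathcal{F}$; you cannot compose these with $\phi:\mathcal{E}\to\mathcal{F}$ to get a map $\mathcal{E}\to\mathcal{L}$, and there is no reason $\phi_{n-1}$ lifts through $L_{n-1}\twoheadrightarrow F_{n-1}$ since $E_{n-1}$ is not projective. Moreover, propagating the pullback correction all the way to the left changes the kernel at the first spot: after covering $F_1$, the kernel of $L_1\to L_2$ is an extension of $B$ by the kernels of the chosen surjections, not $B$, so the result is no longer an object of $\YExt^{n-1}_{k,d}(E_n,B)$ and no longer an $\mathcal{E}$-diagram. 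The paper does the dual operation in the dual direction: embed $G_1$ into a free module, push out, and iterate left to right. Pushouts receive a map from the old complex (so the composite $\mathcal{E}\to\mathcal{G}\to\mathcal{L}$ exists), they preserve $B$ at the left end, and the last term $L_{n-1}$ is then free because $E_n$ is.

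Your injectivity pass also misses the point where the paper invests its effort. The paper obtains injectivity of \emph{every} $\mathcal{E}$-diagram from the standing choice of $\mathcal{E}$ with lexicographically minimal $\lvert E_{n-1}\rvert,\dots,\lvert E_2\rvert$: if some $\varphi_k$ had a kernel, replacing $E_k$ by $E_k/\Ker(\varphi_k)$ and pulling back at spot $k-1$ would produce a strictly smaller representative of $e$. You never invoke this minimality, which is the reason the representative was fixed that way. Your substitute, direct-summing with an exact complex of free modules, can in principle be made to work (it is essentially a cone construction, close in spirit to $\mathcal{E}_{n-1}(a,b,m)$ in Lemma \ref{lem ext can}), but not as literally stated: at the first spot, commutativity over $B$ forces the auxiliary map $\iota_1$ to vanish on $f_0(B)$, so $\iota_1$ cannot be an embedding of $E_1$; one must instead arrange $\iota_1$ to be injective only on $\Ker(\phi_1)$ and check that the enlarged complex remains exact with zero terms at both ends. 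Since your freeness step fails and your injectivity step is only sketched at the point where it conflicts with commutativity, the argument as written does not establish the lemma.
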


\begin{proof}
The existence of such a diagram implies the triviality of $e$, by Lemma \ref{lem ext triv}.

Let us now prove the converse. Assume $e = 0$. Then by Lemma \ref{lem ext triv}, there exists a $\mathcal{E}$-diagram $\varphi : \mathcal{E} \to \mathcal{G}$ of the following shape:
\begin{equation} \label{diag inj}
\xymatrix{
0 \ar[r] & B \ar[r]^{f_0} \ar[d]^= & E_1 \ar[r]^{f_1} \ar[d]^{\varphi_1} & \dots \ar[r] & E_{n-1} \ar[r]^{f_{n-1}} \ar[d]^{\varphi_{n-1}} & E_n \ar[d]^= \ar[r]^{f_n} & A \ar[r] & 0 \\
0 \ar[r] & B \ar[r]^{h_0} & G_1 \ar[r]^{h_1} & \dots \ar[r] & G_{n-1} \ar[r]^{h_{n-1}} & E_n \ar[r] & 0 & \, ,
}
\end{equation}

We now prove by induction that all $\varphi_i$ are injective. By construction, $\varphi_n$ is injective.

Assume that $\varphi_i$ is injective for all $k < i \leq n$. Let us prove that $\varphi_k$ is also injective. 

If $k= 1$, this is elementary.


Assume now that $k \geq 2$.
Consider the quotient $\overline{E}_k := E_k / \Ker(\varphi_k)$, define $\overline{E}_{k-1} := G_{k-1} \times_{G_k} \overline{E}_k$. Then we have a natural commutative diagram of $n$-extensions of $A$ by $B$:
\[
\xymatrix{
 \dots \ar[r] & E_{k-3} \ar[d] \ar[r] & E_{k-2} \ar[r] \ar[d] & E_{k-1} \ar[r] \ar[d] & E_k \ar[r] \ar[d] & E_{k+1} \ar[r] \ar[d]^{=} & \dots \\
\dots \ar[r] & G_{k-3} \ar[d]^= \ar[r] & G_{k-2} \ar[r] \ar[d]^= & \overline{E}_{k-1} \ar[r] \ar[d] & \overline{E}_k \ar[r] \ar[d] & E_{k+1} \ar[r] \ar[d] & \dots \\
\dots \ar[r] & G_{k-3} \ar[r] & G_{k-2} \ar[r] & G_{k-1} \ar[r] & G_k \ar[r] & G_{k+1} \ar[r] & \dots \, .
}
\]
In particular, the $n$-extension 
$$0 \to B \xrightarrow{h_0} G_1 \xrightarrow{h_1} \dots \to G_{k-3} \xrightarrow{h_{k-3}} G_{k-2} \to \overline{E}_{k-1} \to \overline{E}_k \to E_{k+1} \to \dots \to E_n \xrightarrow{f_n} A \to 0$$
represents the class $e$, with $\lvert \overline{E}_k \rvert \leq \lvert E_k \rvert$. By minimality of the extension $\mathcal{E}$, we have $\lvert \overline{E}_k \rvert = \lvert E_k \rvert$, hence $\varphi_k$ is injective.

Hence we proved the existence of an injective diagram $\varphi : \mathcal{E} \to \mathcal{G}$ (see \eqref{diag inj}).

It is now sufficient to prove the existence of an injective morphism $\phi' : \mathcal{G} \to \mathcal{L}$ in $\YExt^{n-1}_{k,d}(E_n, B)$, with $\mathcal{L}$ free.

The $\Gamma$-module $G_1$ can be embedded in a finite $\Gamma$-module $L_1$ that is free as a $\F$-module. Then we have a natural commutative diagram:
\[
\xymatrix{
0 \ar[r] & B \ar[r]^{h_0} \ar[d]^{=} & G_1 \ar[r]^{h_1} \ar[d]^{\phi'_1} & G_2 \ar[r]^{h_2} \ar[d]^{\widetilde{\phi_2}} & G_3 \ar[r]^{h_3} \ar[d]^= & \dots \ar[r] & G_{n-1} \ar[r]^{h_{n-1}} \ar[d]^{=} & E_n \ar[r] \ar[d]^= & 0 & \\
0 \ar[r] & B \ar[r]^{m_0} & L_1 \ar[r]^{h'_1}  & G'_2 \ar[r] & G_3 \ar[r]^{h_3} & \dots \ar[r] & G_{n-1} \ar[r]^{h_{n-1}} & E_n \ar[r] & 0 & \, ,
}
\]
where $\phi_1'$ and $\widetilde{\phi_2}$ are injective and $L_1$ is free. An easy induction (starting by embedding $G'_2$ into a $\Gamma$-module that is free as a $\F$-module) proves that there exists a commutative diagram
\[
\xymatrix{
0 \ar[r] & B \ar[r]^{h_0} \ar[d]^{=} & G_1 \ar[r]^{h_1} \ar[d]^{\phi'_1} & G_2 \ar[r]^{h_2} \ar[d]^{\phi'_2} & \dots \ar[r] & G_{n-1} \ar[r]^{h_{n-1}} \ar[d]^{\phi'_{n-1}} & E_n \ar[r] \ar[d]^= & 0 & \\
0 \ar[r] & B \ar[r]^{m_0} & L_1 \ar[r]^{m_1}  & L_2 \ar[r]^{m_2} & \dots \ar[r] & L_{n-1} \ar[r]^{m_{n-1}} & E_n \ar[r] & 0 & \, ,
}
\]
with all vertical maps injective, and $L_1, \dots, L_{n-2}$ free as $\F$-modules. Since $E_n$ is free as a $\F$-module (see Lemma \ref{lem E_n free}), then $L_{n-1}$ is also free as a $\F$-module, which concludes the proof.
\end{proof}

For any non-negative integers $a, b, m$, define $\mathcal{E}_{n-1}(a,b,m)$ to be the following $(n-1)$-extension of free $\F$-modules:
$$\mathcal{E}(a,b,m) := (0 \to F_0 \xrightarrow{g_0} F_1 \xrightarrow{g_1} \dots \to F_{n-2} \xrightarrow{g_{n-2}} F_{n-1} \xrightarrow{g_{n-1}} F_{n} \to 0) \, ,$$
where $F_0 := \F^b$, $F_1 := \F^b \oplus \F^m$, $F_2 = \dots = F_{n-2} = \F^{m} \oplus \F^m$, $F_{n-1} = \F^{m} \oplus \F^a$,  $F_{n} = \F^a$, and $g_0(x) := (x,0)$, $g_i(x,y) = (y,0)$ for $1 \leq i \leq n-2$ and $g_{n-1}(x,y) := y$.

\begin{lemma}
\label{lem ext can}
Assume $B$ is free as a $\F$-module. Let $\mathcal{L} = (0 \to B \xrightarrow{m_0} L_1 \xrightarrow{m_1} \dots \to L_{n-1} \xrightarrow{m_{n-1}} E_n \to 0)$ be an object in $\YExt^{n-1}_{k,d}(E_n,B)$ that is free. Let $a$ (resp. $b$) be the rank of $E_n$ (resp. $B$). 

Then there exist an integer $m$, a Galois action on $\mathcal{E}_{n-1}(a,b,m)$ and an injective morphism $\phi : \mathcal{E} \to \mathcal{E}_{n-1}(a,b,m)$ in $\YExt^{n-1}_{k,d}(E_n,B)$.
\end{lemma}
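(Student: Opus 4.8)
The plan is to construct the injective morphism $\mathcal{L} \to \mathcal{E}_{n-1}(a,b,m)$ in $\YExt^{n-1}_{k,d}(E_n,B)$ together with the Galois action \emph{simultaneously}, exploiting two structural features. First, the differentials $g_i$ of $\mathcal{E}_{n-1}(a,b,m)$ are the shift maps $(x,y)\mapsto(y,0)$, and a direct check shows that each $g_i$ stays $\Gamma$-equivariant no matter which (possibly twisted) $\Gamma$-module structures one puts on the internal free modules $P'_i:=\F^m$ (the second factor of $F_i$, i.e. the first factor of $F_{i+1}$), as long as the extreme terms carry the given actions of $B$ and $E_n$. Thus giving a Galois action on $\mathcal{E}_{n-1}(a,b,m)$ is the same as producing an exact complex of $\Gamma$-modules $0\to B\to F_1\to\cdots\to F_{n-1}\to E_n\to 0$ whose terms are free over $\F$ and whose syzygies $Z_i:=\Ker(F_i\to F_{i+1})$ are free over $\F$ of the prescribed ranks ($Z_1=B$, the internal ones of rank $m$). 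Second, since $\F=\Z/d\Z$ is quasi-Frobenius, a module free over $\F$ is $\F$-injective; hence every inclusion of a free $\F$-module splits over $\F$, and quotients of free $\F$-modules by free-$\F$ submodules stay free over $\F$.

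Before building $\phi$ I would record that all syzygies $Z_i:=\Ker(L_i\to L_{i+1})$ of $\mathcal{L}$ are free over $\F$. This is exactly where the hypothesis that $B$ is free over $\F$ enters: one argues by induction, $Z_1=B$ being free, and each $Z_{i+1}=L_i/Z_i$ being free because $Z_i\hookrightarrow L_i$ splits over $\F$. Consequently $\mathcal{L}$ decomposes into short exact sequences $0\to Z_i\to L_i\to Z_{i+1}\to 0$ of $\Gamma$-modules free over $\F$, with $Z_1=B$ and $Z_n=E_n$.

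The heart of the argument is then a sequential left-to-right construction, designed to avoid the extension/lifting obstructions that appear if one instead fixes the internal modules first and tries to realize the class of $L_1$ or to extend partial maps. Set $P'_0:=B$ and $\eta_0:=\id_B$. At stage $1\le i\le n-2$, form the pushout $L'_i$ of $0\to Z_i\to L_i\to Z_{i+1}\to 0$ along the already-built equivariant injection $\eta_{i-1}\colon Z_i\hookrightarrow P'_{i-1}$; the map $L_i\hookrightarrow L'_i$ is injective and $L'_i$ sits in $0\to P'_{i-1}\to L'_i\to Z_{i+1}\to 0$. Using that every finite $\F[G]$-module embeds into a free $\F[G]$-module (again because $\F[G]$ is quasi-Frobenius, $G$ being a common finite quotient through which $\Gamma$ acts on all modules in play), I would embed $L'_i$ equivariantly into a free $\F[G]$-module $F_i$, set $P'_i:=F_i/P'_{i-1}$ (free over $\F$ by the quasi-Frobenius remark), and take $\eta_i\colon Z_{i+1}\hookrightarrow P'_i$ and $\phi_i\colon L_i\hookrightarrow F_i$ to be the induced maps. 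At the last stage $i=n-1$ one simply lets $F_{n-1}$ be the pushout $L'_{n-1}$, whose quotient is already $E_n$. A routine diagram chase shows that the maps $g_{i-1}:=(P'_{i-1}\hookrightarrow F_i)\circ(F_{i-1}\twoheadrightarrow F_{i-1}/P'_{i-2})$ are precisely the canonical shifts, that the resulting complex is exact with free-over-$\F$ syzygies $P'_i$, and that the $\phi_i$ assemble into an injective equivariant chain map inducing the identity on $B$ and $E_n$.

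Finally I would uniformize: choosing $m:=\max_i\operatorname{rank}_\F P'_i$, pad the complex with elementary acyclic complexes $[\,Q\xrightarrow{\id}Q\,]$ (with $Q$ a permutation, i.e. free $\F[G]$, module) in adjacent degrees to raise each internal syzygy to rank $m$ and each internal term to $\F^m\oplus\F^m$, extending $\phi$ by $0$ on the new summands. This produces exactly $\mathcal{E}_{n-1}(a,b,m)$ with the constructed Galois action, together with the required injective morphism. The main obstacle to keep in view is precisely the interaction of the \emph{rigid} canonical shape with the Galois structure: fixing the internal modules and then lifting the extension class of $L_1$, or extending the $\phi_i$ along the syzygy inclusions, is genuinely obstructed (by an $H^2$, as $\F[G]$ is not semisimple); the pushout-then-embed strategy, combined with the freedom to twist the internal $\Gamma$-actions and with the quasi-Frobenius property forcing all syzygies to remain free over $\F$, is exactly what removes the obstruction.
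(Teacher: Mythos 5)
Your proof is correct, but it takes a genuinely different route from the paper's. The paper also splits the $(n-1)$-extension into short exact sequences and inducts from left to right, but it builds the target in the canonical shape from the outset: each syzygy $L$ is embedded as a direct summand of $\F^m=L\oplus L'$ with \emph{trivial} $\Gamma$-action on the complement $L'$, and the $\Gamma$-action on each middle term $\F^r\oplus\F^s$ is then defined by hand as an upper-triangular twist, via a cocycle $\rho:\Gamma\to\Hom(L_3,L_1)$ coming from an $\F$-linear section; the triviality of the action on $L'$ is exactly what lets $\rho$ be extended by zero to $\widetilde\rho:\Gamma\to\Hom(\F^s,\F^r)$ while preserving the cocycle condition. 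You instead replace this explicit cocycle manipulation by the structural input that $\F[G]$ is quasi-Frobenius (every finite module embeds equivariantly into a free one; free $\F$-modules are self-injective, so quotients of free by free stay free by Krull--Schmidt), running a pushout-then-embed induction in which the Galois action on the target is produced automatically rather than written down. The price is the extra uniformization step at the end --- choosing $\F$-bases and padding with acyclic complexes $[Q\xrightarrow{\id}Q]$ to force the prescribed ranks and the canonical shift-map shape --- which the paper avoids by construction. Your preliminary observation that a $\Gamma$-action on $\mathcal{E}_{n-1}(a,b,m)$ amounts to an exact complex of $\Gamma$-modules, $\F$-free with $\F$-free syzygies of the prescribed ranks, is the right justification for that last step (the action need only respect the sub/quotient filtration, not the direct-sum decomposition). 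Both arguments are sound; yours is more conceptual, the paper's more explicit and self-contained.
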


\begin{proof}
Choose $m$ large enough such that $m$ is greater than or equal to the rank of $L_i$, for all $i$. 

Splitting the $(n-1)$-extension into short exact sequences, the statement reduces to two facts :
\begin{itemize}
	\item given any free $\F$-module $L$ with a $\Gamma$-action and any integer $s$ greater than or equal to the rank of $L$, there exists a decomposition $\F^s = L \oplus L'$ and therefore a $\Gamma$-action on $\F^s = L \oplus L'$ such that $\Gamma$ acts trivially on $L'$, with a $\Gamma$-equivariant embedding of $L$ into $\F^s$.
	\item given a diagram of short exact sequences of free $\F$-modules (the second one being the obvious one)
\[
\xymatrix{
0 \ar[r] & L_1 \ar[d] \ar[r] & L_2 \ar[r] & L_3 \ar[r] \ar[d] & 0 \\
0 \ar[r] & \F^r \ar[r]^i & \F^r \oplus \F^s \ar[r]^p & \F^s \ar[r] & 0
}
\]
where the vertical maps are injective, $L_1, L_2, L_3$, $\F^r$ and $\F^s$ are endowed with a $\Gamma$-action such that the arrows are $\Gamma$-equivariant, and assuming there is a decomposition $\F^s = L_3 \oplus L'$ such that the action on $L'$ is trivial, there exists a $\Gamma$-action on $\F^r \oplus \F^s$ and an embedding $L_2 \to \F^r \oplus \F^s$ making the previous diagram a commutative diagram of $\Gamma$-modules.

Indeed, the choice of a section of the first line and the action of $\Gamma$ on $L_2$ define a map $\rho : \Gamma \to \Hom(L_3,L_1)$ satisfying a cocycle condition
$$\rho(\sigma \tau)(x) = {^\sigma \rho(\tau)(x)} + \rho(\sigma)(^{\tau} x) \, .$$

In order to prove the aforementioned fact, one needs to extend $\rho$ to a map $\tilde{\rho} : \Gamma \to \Hom(\F^s, \F^r)$ satisfying a similar condition. One easily checks that the maps $\tilde{\rho}(\gamma) : \F^s = L_3 \oplus L' \to \F^r$ defined by $\widetilde{\rho}(\gamma)(x,y) := \rho(\gamma)(x)$ do satisfy this condition.
\end{itemize}
\end{proof}

\begin{lemma} \label{lem equiv diag}
Assume $B$ is free as a $\F$-module. Let $\phi, \psi : \mathcal{E} \to \mathcal{F}$ be two injective $\mathcal{E}$-diagrams in the category $\mathcal{M}_d$, such that $\mathcal{F}$ is free.

Then there exists an automorphism $\epsilon : \mathcal{F} \to \mathcal{F}$ in $\YExt^{n-1}_{d}(E_n,B)$ such that $\psi = \epsilon \circ \phi$.
\end{lemma}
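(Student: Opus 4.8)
The plan is to reduce the existence of $\epsilon$ to a homotopy-theoretic statement about the complex $\mathcal{F}$, and then to upgrade the resulting chain endomorphism to an automorphism. Write $\mathcal{F} = (0 \to B = F_0 \xrightarrow{m_0} F_1 \to \dots \to F_{n-1} \xrightarrow{m_{n-1}} F_n = E_n \to 0)$, and let $\mathcal{E}'$ be $\mathcal{E}$ truncated at $E_n$ (forgetting $A$), so that $\phi, \psi : \mathcal{E}' \to \mathcal{F}$ are chain maps, levelwise monomorphisms, equal to the identity in degrees $0$ and $n$. An automorphism $\epsilon$ of $\mathcal{F}$ in $\YExt^{n-1}_{d}(E_n,B)$ with $\psi = \epsilon \circ \phi$ is the same as a chain endomorphism $\eta := \epsilon - \id$ of $\mathcal{F}$ satisfying $\eta_j \circ \phi_j = \delta_j := \psi_j - \phi_j$ for all $j$, together with the requirement that $\id + \eta$ be a levelwise isomorphism; note that $\eta_0 = 0$ and $\eta_n = 0$ then hold automatically, since in degrees $0$ and $n$ the map $\phi$ is the identity, whence $\eta_0 = \eta_0\phi_0 = \delta_0 = 0$ and likewise $\eta_n = 0$.

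First I would produce such an $\eta$. Here both instances of freeness enter through the self-injectivity of the ring $\F = \Z/d\Z$: a bounded exact complex of free (hence projective) $\F$-modules is split exact, so $\mathcal{F}$ carries a contracting homotopy $\kappa$ with $\id_{\mathcal{F}} = m\kappa + \kappa m$; moreover each cycle module $Z_j := \Ker(m_j) = \im(m_{j-1})$ is a direct summand of the injective module $F_j$, hence is itself injective. Since $\delta$ is a chain map (so $m\delta = \delta f$, with $f$ the differential of $\mathcal{E}'$), the maps $s_j := \kappa_j \circ \delta_j : E_j \to F_{j-1}$ form a null-homotopy of $\delta$: indeed $\delta = (m\kappa + \kappa m)\delta = m(\kappa\delta) + \kappa(\delta f) = ms + sf$. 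As $\phi_j$ is a monomorphism and $F_{j-1}$ is injective, I extend each $s_j$ along $\phi_j$ to a map $H_j : F_j \to F_{j-1}$ with $H_j \phi_j = s_j$, and set $\eta := mH + Hm$. This is a chain endomorphism, and a direct computation gives $\eta_j \phi_j = m_{j-1} s_j + s_{j+1} f_j = \delta_j$. Thus $\epsilon := \id + \eta$ is a chain endomorphism of $\mathcal{F}$, restricts to the identity on $B$ and $E_n$, and satisfies $\epsilon \phi = \psi$.

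The main obstacle is the last requirement: showing that $\epsilon$, or a suitable choice of it, is a \emph{levelwise isomorphism}, and hence a genuine automorphism of the extension. This is not formal, because a chain endomorphism of an exact complex which is the identity on the two end terms need not be an isomorphism; so the injectivity of $\phi$ and $\psi$ must be used. Since $\epsilon_j$ carries $\im \phi_j$ isomorphically onto $\im \psi_j$, it induces a chain map $\bar\epsilon : \mathcal{C}_\phi \to \mathcal{C}_\psi$ between the cokernel complexes $\mathcal{C}_\phi = \mathcal{F}/\phi(\mathcal{E}')$ and $\mathcal{C}_\psi = \mathcal{F}/\psi(\mathcal{E}')$, and by the five lemma (applied to $0 \to \im\phi_j \to F_j \to \mathcal{C}_\phi^j \to 0$ and its analogue for $\psi$) the map $\epsilon$ is a levelwise isomorphism as soon as $\bar\epsilon$ is. A homology computation using the long exact sequence of $0 \to \mathcal{E}' \to \mathcal{F} \to \mathcal{C}_\phi \to 0$ shows that $\mathcal{C}_\phi$ and $\mathcal{C}_\psi$ are resolutions of $A$ concentrated in the top degree, which reduces the problem to a shorter instance of the same shape.

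I would then close the argument by a downward induction on the degree, exploiting the splitting $F_j \cong Z_j \oplus Z_{j+1}$: in the induced block-triangular form of $\epsilon_j$ the lower diagonal block is the (inductively invertible) map induced on $Z_{j+1}$, so it suffices to arrange the upper diagonal block $\epsilon_j|_{Z_j}$ to be bijective. The freedom left by the choice of the homotopy $H$ — equivalently, the freedom to modify $\epsilon_j$ by a map $\mathcal{C}_\phi^j \to Z_j$ vanishing on $\im\phi_j$ — can be used here precisely because $Z_j$ is injective, and finiteness of the $F_j$ turns injectivity into bijectivity. I expect this isomorphism verification to be the delicate part of the proof, the construction of $\eta$ being essentially formal once the self-injectivity of $\F$ and the resulting contractibility of $\mathcal{F}$ are in hand.
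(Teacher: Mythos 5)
The first half of your argument is correct and is a clean repackaging of the mechanism the paper uses: $\F=\Z/d\Z$ is self-injective, so the free exact complex $\mathcal F$ is split exact with contracting homotopy $\kappa$, the chain map $\delta=\psi-\phi$ is null-homotopic via $s=\kappa\delta$, and extending each $s_j$ along the monomorphism $\phi_j$ into the injective module $F_{j-1}$ yields a morphism $\epsilon=\id+mH+Hm$ of $(n-1)$-extensions with $\epsilon\phi=\psi$. The gap is in the second half, which you yourself flag as the delicate point but do not actually carry out. Writing $Z_j=\Ker m_j=\im m_{j-1}$ and $Y_j=\Ker f_j=\im f_{j-1}$, your block-triangular reduction correctly shows that $\epsilon$ is an automorphism if and only if each $a_j:=\epsilon_j|_{Z_j}:Z_j\to Z_j$ is bijective. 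But (a) $a_j$ is only known to be injective on $\phi_j(Y_j)$ (because $\epsilon_j\phi_j=\psi_j$ is injective), not on all of $Z_j$, so the proposed step ``finiteness of the $F_j$ turns injectivity into bijectivity'' has no injectivity to start from; and (b) the admissible modifications of $\epsilon$ (adding a chain endomorphism $\theta$ of $\mathcal F$ with $\theta\phi=0$ and $\theta_0=\theta_n=0$) change $a_j$ only within a certain coset of endomorphisms of $Z_j$ vanishing on $\phi_j(Y_j)$, and you never show that this coset contains an invertible element; attempting to realize a prescribed $\beta:Z_j\to Z_j$ as $\theta_j|_{Z_j}$ forces nontrivial compatible choices of $\theta_{j-1},\theta_{j-2},\dots$, which is not addressed.

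What is missing is precisely the paper's non-formal input. Its first bullet — two embeddings of a finite $\F$-module into a \emph{free} $\F$-module differ by an automorphism of the target (adapted bases, i.e.\ Smith normal form over $\Z/p^k$; alternatively injective hulls plus Krull--Schmidt) — applied to the two embeddings $\phi|_{Y_j},\psi|_{Y_j}:Y_j\to Z_j$ (the $Z_j$ are free, being stably free summands of the $F_j$) is exactly the existence of \emph{some} automorphism $\alpha_j$ of $Z_j$ with $\alpha_j\circ\phi|_{Y_j}=\psi|_{Y_j}$; this is a genuine structure-theoretic fact (compare the two non-conjugate copies of $\Z/2$ inside $\Z/4\oplus\Z/2$) and is nowhere established in your proposal. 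Its second bullet — the surjectivity of $\phi_3^*:\Hom(F_3,F_1)\to\Hom(A_3,F_1)$ — is what lets one glue the $\alpha_j$ into automorphisms of the $F_j$, and is the analogue of showing that $\alpha_j-a_j$ is reachable by your homotopy modifications. The paper sidesteps the global chain-map detour entirely by splitting $\mathcal F$ into the short exact sequences $0\to Z_j\to F_j\to Z_{j+1}\to 0$, producing the $\alpha_j$ first and then gluing; to complete your route you would have to import both of these facts, at which point your construction of $\eta$ becomes superfluous.
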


\begin{proof}
As in the proof of Lemma \ref{lem ext can}, splitting the $n$-extension $\mathcal{E}$ into short exact sequences reduces the statement to the following facts:
\begin{itemize}
	\item given two embeddings of $\F$-modules $\phi : E \to F$ and $\psi : E \to F$ with $F$ free, there exists an automorphism $\epsilon$ of $F$ such that $\psi = \epsilon \circ \phi$. Indeed, one only needs to choose one basis of $F$ adapted to each embedding.
	\item given two diagrams of short exact sequences of $\F$-modules
\[
\xymatrix{
0 \ar[r] & A_1 \ar[d]^{\phi_1, \psi_1} \ar[r] & A_2 \ar[d]^{\phi_2, \psi_2} \ar[r] & A_3 \ar[r] \ar[d]^{\phi_3, \psi_3} & 0 \\
0 \ar[r] & F_1 \ar[r]^i & F_2 \ar[r]^p & F_3 \ar[r] & 0
}
\]
where the vertical maps are injective and the $F_i$ are free, and given $\epsilon_1 \in \Aut(F_1)$ and $\epsilon_3 \in \Aut(F_3)$ such that $\psi_i = \epsilon_i \circ \phi_i$, there exists $\epsilon_2 \in \Aut(F_2)$ such that $\psi_2 = \epsilon_2 \circ \phi_2$ and $(\epsilon_i)_{1 \leq i \leq 3}$ is an automorphism of the bottom exact sequence.

Indeed, the modules $F_i$ being free, one can first fix a section $F_2 = F_1 \oplus F_3$. Then $\phi_2$ and $\psi_2$ induce morphisms $\phi_{2,1}, \psi_{2,1} \in \Hom(A_2, F_1)$ and $\phi_{2,3}, \psi_{2,3} \in \Hom(A_2, F_3)$. Then the existence of $\epsilon_2$ is equivalent to the existence of $\epsilon \in \Hom(F_3, F_1)$ such that $\psi_{2,1} = \epsilon_1 \circ \phi_{2,1} + \epsilon \circ \phi_{2,3}$. Such a $\epsilon$ exists since the map $\phi_3^* : \Hom(F_3, F_1) \to \Hom(A_3, F_1)$ is onto (because $F_1$ and $F_3$ are free and $\phi_3$ is injective).
\end{itemize}
\end{proof}

The following statement is the main result of this section:


\begin{theorem} \label{prop norm ext}
Let $n \geq 3$ and let $A, B$ be objects of $\mathcal M_{k,d}$ and assume $A$ or $B$ is free in $\mathcal{M}_d$. Pick a class $e \in \Yext^n_{k,d}(A,B)$.

Then, there exists a smooth geometrically integral ind-variety $(X_i)_{i \in \N}$, which is a splitting family for $e$.
\end{theorem}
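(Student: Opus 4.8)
The plan is to imitate the proof of Theorem \ref{thm deg 2}, replacing the single auxiliary diagram by a countable, $l$-independent list of them, and then to organize this list into an ind-variety. Throughout I keep the minimal representative $\mathcal{E}=(0\to B\to E_1\to\dots\to E_n\to A\to 0)$ fixed above, whose term $E_n$ is free by Lemma \ref{lem E_n free}. First I would apply Pontryagin duality $\Hom(-,\F)$ to reduce to the case where $B$ is free as an $\F$-module: dualizing the $n$-extension carries $\YExt^n_{k,d}(A,B)$ to $\YExt^n_{k,d}(B^\vee,A^\vee)$, commutes with $\Res_{l/k}$, and exchanges the roles of $A$ and $B$, so that freeness of $A$ becomes freeness of the new kernel term. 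With $B$ free, Lemma \ref{lem free diag} gives, for every infinite extension $l/k$, the equivalence: $\Res_{l/k}(e)=0$ if and only if there is a $\Gamma_l$-equivariant injective $\mathcal{E}$-diagram $\mathcal{E}\to\mathcal{L}$ with $\mathcal{L}\in\YExt^{n-1}_{l,d}(E_n,B)$ free, where $\Gamma_l$ denotes the absolute Galois group of $l$.

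The crux is that the free target $\mathcal{L}$ is a priori unbounded. Here Lemma \ref{lem ext can} is decisive: with $a:=\mathrm{rank}(E_n)$ and $b:=\mathrm{rank}(B)$, every free $\mathcal{L}$ embeds, for some $m$ and some Galois action, into the explicit free extension $\mathcal{F}_m:=\mathcal{E}_{n-1}(a,b,m)$. Composing the embedding $\mathcal{L}\hookrightarrow\mathcal{F}_m$ with the $\mathcal{E}$-diagram $\mathcal{E}\to\mathcal{L}$, I obtain that $\Res_{l/k}(e)=0$ if and only if, for some $m\in\N$, there is a $\Gamma_l$-equivariant injective $\mathcal{E}$-diagram $\mathcal{E}\to\mathcal{F}_m$. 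The decisive feature is that the underlying complex $\mathcal{F}_m$, hence everything computed in $\mathcal{M}_d$ below, is independent of $l$: only the Galois action varies, so a single countable family of groups will work uniformly in $l$.

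Fix $m$. By Lemma \ref{lem equiv diag} the injective $\mathcal{E}$-diagram into the Galois-forgotten complex $\mathcal{F}_m$ is unique up to $\Aut_{\YExt^{n-1}_d(E_n,B)}(\mathcal{F}_m)$; fix a representative $\mathcal{D}'_m\in\DIAG_{\mathcal{M}_d}(\mathcal{E}')$ and write $\varphi_m\colon\Aut(\mathcal{D}'_m)\to\Aut(\mathcal{E}')$ for the natural map of finite constant groups. By Example \ref{ex mkd} (applied over $l$), a $\Gamma_l$-equivariant injective $\mathcal{E}$-diagram $\mathcal{E}\to\mathcal{F}_m$ is the same as a lift of $p|_{\Gamma_l}\colon\Gamma_l\to\Aut(\mathcal{E}')$ along $\varphi_m$, i.e. the class of the $\Aut(\mathcal{E}')$-torsor $P_{\mathcal{E}}$ attached to $p$ lies in the image of $H^1(l,\Aut(\mathcal{D}'_m))\to H^1(l,\Aut(\mathcal{E}'))$. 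Setting $X_m:=X_{\varphi_m,P_{\mathcal{E}}}$, Proposition \ref{lem versal lift} yields $X_m(l)\neq\emptyset$ exactly when such a lift exists; combined with the previous paragraph this shows $(X_m)_{m\in\N}$ is a splitting family for $e$. For smoothness and geometric integrality I invoke Lemma \ref{lem smooth rat}, for which it suffices that $\varphi_m$ be surjective: automorphisms fixing the ends $B,E_n$ lift by Lemma \ref{lem equiv diag} (compare the $\mathcal{E}$-diagrams $\phi$ and $\phi\circ\sigma$), and the remaining ones lift by the same adapted-basis argument carried out with prescribed action on the free ends.

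Finally, the coordinate inclusion $\F^m\hookrightarrow\F^{m+1}$ induces a morphism $\mathcal{F}_m\hookrightarrow\mathcal{F}_{m+1}$ in $\YExt^{n-1}_{k,d}(E_n,B)$ splitting off a trivial complementary summand; letting automorphisms act as the identity there produces compatible injections $\Aut(\mathcal{D}'_m)\hookrightarrow\Aut(\mathcal{D}'_{m+1})$ over $\Aut(\mathcal{E}')$. I would then choose the defining representations nested and compatibly and descend these inclusions to closed immersions $X_m\hookrightarrow X_{m+1}$, presenting $(X_m)$ as an ind-variety. I expect this last step to be the main obstacle: making the descended maps genuinely \emph{closed immersions} (rather than mere morphisms) forces the $\Aut(\mathcal{D}'_{m+1})$-orbits meeting $P_{\mathcal{E}}\times V_0^{(m)}$ to collapse onto single $\Aut(\mathcal{D}'_m)$-orbits, a transversality condition on the nested open loci $V_0^{(m)}\subset V_0^{(m+1)}$ that must be secured by a sufficiently generic choice of the representations.
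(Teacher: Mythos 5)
Your proposal is correct and follows essentially the same route as the paper: Pontryagin duality to make $B$ free, the fixed representative with $E_n$ free, reduction via Lemmas \ref{lem free diag} and \ref{lem ext can} to injective diagrams into the canonical free targets $\mathcal{E}_{n-1}(a,b,m)$, uniqueness of these diagrams via Lemma \ref{lem equiv diag}, the lifting varieties $X_{\varphi_m, P_{\mathcal{E}}}$ of Proposition \ref{lem versal lift}, surjectivity of $\Aut(\mathcal{D}_m)\to\Aut(\mathcal{E})$ for smoothness, and the coordinate inclusions $\F^m\hookrightarrow\F^{m+1}$ to produce the direct system. The closed-immersion issue you flag at the end is real but is also left implicit in the paper, which only asserts that functoriality and the injections $\Aut(\mathcal{D}_m)\hookrightarrow\Aut(\mathcal{D}_{m+1})$ yield a direct system of varieties.
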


Before proving this Theorem, we state explicitely the following consequence:





\begin{corollary}\label{cor cohom general}
Let $n \geq 3$, let $A$ be a finite $\Gamma$-module and let $\alpha \in H^n(k,A)$.

Then there exists a smooth geometrically integral ind-variety $(X_i)_{i \in \N}$, which is a splitting family for $\alpha$.
\end{corollary}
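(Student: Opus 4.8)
The plan is to deduce Corollary \ref{cor cohom general} from Theorem \ref{prop norm ext} exactly as Corollary \ref{cor cohom deg 2} was deduced from Theorem \ref{thm deg 2}, but now I must be a little more careful because Theorem \ref{prop norm ext} carries the standing hypothesis that \emph{$A$ or $B$ is free in $\mathcal{M}_d$}. First I would let $d$ be the exponent of $A$ and invoke Remark \ref{compcoh} to obtain the canonical isomorphism $\Yext^n_{k,d}(\F,A) \xrightarrow{\sim} H^n(\Gamma,A) = H^n(k,A)$; under this identification the class $\alpha$ corresponds to a class $e \in \Yext^n_{k,d}(\F,A)$. Here the ``$A$'' of the Theorem is played by the module $\F = \Z/d\Z$ and the ``$B$'' is played by the module $A$. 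The crucial observation is that $\F = \Z/d\Z$ is free of rank $1$ as a $\F$-module, so the hypothesis ``$A$ or $B$ is free in $\mathcal{M}_d$'' is automatically satisfied (it is the first module that is free). Thus the freeness assumption costs us nothing in this application.

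Having arranged the hypotheses, I would simply apply Theorem \ref{prop norm ext} to the pair $(\F, A)$ and the class $e$, which yields a smooth geometrically integral ind-variety $(X_i)_{i \in \N}$ that is a splitting family for $e$. It then remains only to translate ``splitting family for $e$'' back into ``splitting family for $\alpha$''. By the definition of a splitting family, for every infinite field extension $l/k$ the restriction $\Res_{l/k}(e)$ vanishes in $\Yext^n_{l,d}(\F,A)$ if and only if at least one of the $X_i$ has an $l$-point. Since the isomorphism of Remark \ref{compcoh} is canonical, it is compatible with the restriction maps $\Res_{l/k}$ on both sides, so $\Res_{l/k}(e)$ vanishes in $\Yext^n_{l,d}(\F,A)$ if and only if the restriction of $\alpha$ vanishes in $H^n(l,A)$. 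Chaining these two equivalences shows that $(X_i)_{i \in \N}$ is a splitting family for $\alpha$ in the sense required, completing the proof.

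I do not expect any genuine obstacle: the entire content of the Corollary is already in Theorem \ref{prop norm ext}, and the only thing to verify is that the cohomological class $\alpha$ can be repackaged as a Yoneda $n$-extension to which the Theorem applies. The one point deserving a sentence of care is precisely the freeness hypothesis of the Theorem, which one must check is met — and it is, because the source module in $\Yext^n_{k,d}(\F,A)$ is $\F$ itself, manifestly free of rank one. The compatibility of the canonical isomorphism of Remark \ref{compcoh} with field extension (i.e.\ the naturality in $l$) is the other ingredient; it is standard but should be stated, since it is exactly what lets the geometric splitting statement pass from $e$ to $\alpha$ for \emph{all} infinite $l/k$ simultaneously.
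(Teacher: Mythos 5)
Your proposal is correct and is exactly the paper's argument: the paper's proof of this corollary is the one-line ``Combine the previous theorem and remark \ref{compcoh}'', and you have simply spelled out the two points that make the combination work (the source module $\F=\Z/d\Z$ is free of rank one, so the freeness hypothesis of Theorem \ref{prop norm ext} holds, and the isomorphism of Remark \ref{compcoh} is compatible with restriction to field extensions).
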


\begin{proof}
Combine the previous theorem and remark \ref{compcoh}.
\end{proof}

%





We now focus on the proof of the main theorem.

\begin{proof}[Proof of Theorem \ref{prop norm ext}] 
Using Pontryagin duality $\Hom(\cdot, \F)$, one can assume that $B$ is free.

Let $\mathcal{E}$ be a $n$-extension representing $e$, as given by Lemma \ref{lem E_n free}. Let $e$ (resp. $b$) be the rank of $E_n$ (resp. $B$) as a free $\F$-module.

The $n$-extension $\mathcal{E}$ defines a group homomorphism $p : \Gamma \to \Aut(\mathcal{E}):=\Aut_{\mathcal{M}_d}(\mathcal{E})$ (see example \ref{ex mkd}), so that $p$ corresponds to a $\Spec(k)$-torsor $P_{\mathcal{E}}$ under $\Aut(\mathcal{E})$.

Then Example \ref{ex mkd} relates the triviality of the class $e$ to the existence of a $\mathcal{E}$-diagram $\mathcal{D}$ in the category $\mathcal{M}_d$ together with a lifting of the torsor $P_{\mathcal{E}}$ to the group $\Aut_{\mathcal{M}_d}(\mathcal{D})$.

Lemmas \ref{lem free diag} and \ref{lem ext can} ensure that in order to construct the splitting varieties, it is sufficient to consider only injective diagrams (of $\F$-modules) $\phi : \mathcal{E} \to \mathcal{E}_{n-1}(e,b,m)$, for some $m \in \N$, i.e. diagrams of the following shape (the aforementioned lemmas essentially say that such diagrams are cofinal in the category of diagrams):
\begin{equation} \label{diag m}
\xymatrix{
0 \ar[r] & B \ar[r]^{f_0} \ar[d]^{\phi_0}_\sim & E_1 \ar[r]^{f_1} \ar[d]^{\phi_1} & \dots \ar[r] & E_{n-1} \ar[r]^{f_{n-1}} \ar[d]^{\phi_{n-1}} & E_n \ar[d]^{\phi_n}_{\sim} \ar[r]^{f_n} & A \ar[r] & 0 \\
0 \ar[r] & F_0 \ar[r]^{g_0} & F_1 \ar[r]^{g_1} & \dots \ar[r] & F_{n-1} \ar[r]^{g_{n-1}} & F_n \ar[r] & 0 & \, ,
}
\end{equation}
where all $\phi_i$ are injective.

In addition, Lemma \ref{lem equiv diag} implies that one only needs to consider one such diagram for each $m$ (since such diagrams with the same $m$ are equivalent up to an automorphism of $\mathcal{E}_{n-1}(e,b,m)$).

Therefore, let us fix, for all $m \in \N$ (sufficiently large), one diagram $\mathcal{D}_m$ of the shape \eqref{diag m} in the category of $\F$-modules, in a compatible way: the diagram $\mathcal{D}_{m+1}$ for the integer $m+1$ is obtained from the diagram $\mathcal{D}_m$ associated to $m$ by composing the morphism $\phi_m : \mathcal{E} \to \mathcal{E}(e,b,m)$ with the natural (injective) morphism $\mathcal{E}(e,b,m) \to \mathcal{E}(e,b,m+1)$.

We have thus defined a direct system of diagrams $\mathcal{D}_m$. For all $m$, let $X_m$ denotes the $k$-variety $X_{\Aut(\mathcal{D}_m) \to \Aut(\mathcal{E}), P_{\mathcal{E}}}$ defined in Proposition \ref{lem versal lift}. By functoriality of the construction of these varieties and by the natural (injective) group homomorphisms $\Aut(\mathcal{D}_m) \to \Aut(\mathcal{D}_{m+1})$, we get a direct system of $k$-varieties $X_m$. 

In addition, Lemma \ref{lem equiv diag} implies that the morphisms $\Aut(\mathcal{D}_m) \to \Aut(\mathcal{E})$ are surjective, hence the varieties $X_m$ are smooth and geometrically unirational.

By construction, $(X_m)_{m \in \N}$ is a splitting family for $e$, which concludes the proof.
\end{proof}






\bibliographystyle{amsplain}

\end{document}